%% file: main.tex
\documentclass[letterpaper, 10 pt, conference]{ieeeconf}
\IEEEoverridecommandlockouts
\usepackage{graphicx} 
\usepackage[hidelinks]{hyperref}
\usepackage{dsfont}
\usepackage{enumerate}%
\usepackage{cite}
\usepackage{balance}
\usepackage{pgfplots}
\usepackage{pgfplotstable}
\pgfplotsset{compat=1.18}

\usepackage{amsmath,amsthm,amssymb}

\usepackage{subcaption}

\usepackage{tikz}
\usetikzlibrary{arrows.meta, positioning}

\usepackage{comment}

\usepackage[capitalize]{cleveref}
\Crefname{lemma}{Lemma}{Lemmas}
\Crefname{proposition}{Proposition}{Propositions}
\Crefname{assumption}{Assumption}{Assumptions}

\newtheorem{proposition}{Proposition}
\newtheorem{lemma}{Lemma}
\newtheorem{theorem}{Theorem}

\theoremstyle{definition}
\newtheorem{definition}{Definition}
\newtheorem{assumption}{Assumption}
\newtheorem{remark}{Remark}

\newtheorem{example}{Example}

\newcommand{\R}{\mathds{R}}

\newcommand{\V}{\mathds{V}}
\newcommand{\spanop}{\operatorname{span}}
\newcommand{\diam}{\operatorname{diam}}
\newcommand{\ran}{\operatorname{ran}}

\usepackage{soul}
\usepackage{bm}
\newcommand{\vb}[1]{\boldsymbol{#1}} 
\newcommand{\mb}[1]{\mathbf{#1}}     

\title{\LARGE \bf
Limitations of LTI Koopman Modeling for Nonlinear Control Systems
}

\author{Johannes Heeg and Karl Worthmann%
\thanks{The authors are with the Optimization-based Control Group, Technische Universität Ilmenau, 98693 Ilmenau, 
Germany {\tt\small [johannes.heeg, karl.worthmann]@tu-ilmeanu.de}.
The authors are thankful for the support by the Deutsche Forschungsgemeinschaft (DFG, German Research Foundation) within the research unit ALeSCo~--~\textbf{A}ctive \textbf{Le}arning in \textbf{S}ystems and \textbf{Co}ntrol, project-ID 535860958.}
}

\begin{document}

\maketitle
\thispagestyle{empty}
\pagestyle{empty}

\begin{abstract}
    Koopman operator theory yields powerful tools for modeling, analysis, and control of nonlinear dynamical systems. 
    Prominently, linear time-invariant (LTI) Koopman representations have been proposed to enable the application of linear control techniques, such as LQR and convex MPC. 
    In this work, we investigate the implications of exact LTI Koopman representations for continuous-time nonlinear control systems. 
    In particular, we show that, assuming a mild controllability condition and the inclusion of the coordinate maps, the dynamics of the underlying control system must be affine linear. 
    Furthermore, we study the modeling bias introduced by the LTI structure and analyze its dependency on the choice of observables.
\end{abstract}

\section{Introduction}

Koopman operator theory
has attracted considerable attention due to
its ability to provide
a linear perspective onto nonlinear dynamical
systems~\cite{mezic2005spectral,rowley2009spectral}.
In the Koopman framework, 
the evolution of a nonlinear system is described through the lens of so-called observable functions (observables for short), rendering the respective dynamics linear but, in general, infinite dimensional.
Recently, different numerical methods to approximate the Koopman operator and its spectrum on finite-dimensional subspaces have been proposed, see, e.g., \cite{Colb24:multiverse} and the references therein.

The successful application of Koopman-based modeling  
for autonomous systems inspired extensions to nonlinear control systems~\cite{surana2016koopman,proctor2018generalizing,williams2016a,haseli2026modeling,fan2026}, including an in-depth error analysis~\cite{BoldPhil25} and robust controller design~\cite{STRASSER2026112732,shang2025exponential}; see the overview article~\cite{StraWort26}.
Prominently, LTI Koopman modeling, i.e., modeling nonlinear control systems as lifted linear time-invariant systems, has emerged as a successful paradigm enabling the use of linear controller design, like linear quadratic regulation~\cite{bevanda2022} or convex model predictive control~\cite{korda2018,asada2024}, for nonlinear systems.
While for particular 
systems, a finite-dimensional LTI Koopman representation can be constructed~\cite{proctor2018generalizing}, it is known that such representations do not exist for general nonlinear systems~\cite{bruder2021,iacob2024,StraWort26}. 
This insight motivated the search for necessary and sufficient conditions for their existence~\cite{bruder2021,iacob2024} and resulted in invariance conditions~\cite{brunton2016koopman,KordMezi20:optimal}.

In this paper, we prove that, if an exact LTI Koopman representation exists, the nonlinear control system must be affine linear assuming the common inclusion of coordinate maps and a rather mild controllability condition. This condition, called \textit{directional excitability} in the following, provides a new structural insight into the admissible system class.
We show that a controllable linearization as well as (local) satisfaction of the Lie-algebra rank condition~(LARC) imply directional excitability, 
substantiating our claim of that being a rather mild condition.
Furthermore, we investigate the LTI modeling bias for nonlinear control systems with constant input matrix. 
Our contribution w.r.t.\ that are conditions derived from an analysis of the interplay of the LTI modeling bias with the choice of observables and the system dynamics.
In addition, we provide analytical and numerical examples to illustrate our findings.

Concurrent with our work, \cite{shang2026existence}~studied the existence of exact finite-dimensional LTI Koopman representations for discrete-time control systems. 
In comparison, our work considers continuous-time systems, requiring the analysis of the Koopman generator,
avoids 
assuming finite dimensionality a priori, and adds a novel error analysis. Our results indicate that the inclusion of additional observables cannot reduce the modeling error arbitrarily and gives a new characterization based on directional excitability.

The remainder of this paper is organized as follows.
\Cref{sec:problem_setting} introduces the problem formulation. Our characterization of systems admitting exact LTI Koopman representations is presented in \Cref{sec:main}. In \Cref{sec:bias}, we present our results on the LTI modeling bias, before conclusions are drawn.\\

\noindent\textbf{Notation}:
The inner product on $\R^n$ is denoted by $\langle \cdot, \cdot \rangle$, and $\|\cdot\|$ is the 2-norm or its induced matrix norm. $\diam(X)$ denotes the diameter of a bounded set~$X$. For a subset~$M \subset \R^n$, $M^\perp$ denotes its orthogonal complement, i.e., $M^\perp = \{ x \in \R^n \mid \langle x,y \rangle = 0\ \forall\,y \in M \}$.
$\operatorname{Id}$ is the identity operator.
For a matrix~$\mb A$, the range is denoted by~$\ran \mb A$ and the Moore-Penrose pseudoinverse by $\mb A^\dagger$.
$L^2(\R^n)$ is the space of square-integrable functions, $\mathcal{C}^k(\R^n)$ the space of $k$-times continuously differentiable functions, 
and $L^\infty_{\operatorname{loc}}([0,\infty),\R^m)$ the space of locally bounded functions
from~$[0, \infty)$ to~$\R^m$.

\section{Problem Setting}
\label{sec:problem_setting}

In this section, we recap  
Koopman operator theory for control systems and define LTI Koopman representations of a system w.r.t.\ a 
function space.
Then, assuming the existence of an LTI Koopman representation including the coordinate maps, we derive a first structural property of the nonlinear system dynamics
\begin{equation}\label{eq:sys}
    \dot{\vb{x}}(t) = \tilde{\vb{f}}(\vb{x}(t), \vb{u}(t)).
\end{equation}
$\vb{x}(t) \in \R^n$ and $\vb{u}(t) \in \R^m$ are the state and the 
input at time $t \geq 0$, respectively. 
Assuming continuous differentiability of $\tilde{\vb{f}} \in \mathcal{C}(\R^n \times \R^m,\R^n)$ w.r.t.\ its first argument, $\vb{x}(t;\vb{x}^0,\vb{u})$ denotes the unique (local) solution 
for the initial condition $\vb{x}(0) = \vb{x}^0$ and control input $\vb{u}\in L^\infty_{\operatorname{loc}}([0,\infty),\R^m)$.

Following the concept of the Koopman control family~\cite{haseli2026modeling}, we define the Koopman operator~$\mathcal{K}^t_{\vb{u}}$ for a constant control function $\vb u(t) \equiv \vb{u} \in \R^m$ and a (sufficiently small) time step~$t$ 
by the identity $\mathcal{K}^t_{\vb{u}} \circ \psi = \psi(\vb x(t;\cdot),\vb u)$ for all $\psi \in L^2(\R^n)$. 
The Koopman generator~$\mathcal{L}_{\vb{u}}$ is defined on the domain $\{ \psi \in L^2(\R^n) \mid \lim_{t \searrow 0} \frac 1t (\mathcal{K}^t_{\vb{u}} - \operatorname{Id}) \psi \text{ exists} \}$. The domain encompasses continuously differentiable 
observables, i.e., functions $\psi \in \mathcal{C}^1(\R^n)$, for which the identity for the Koopman generator~$\mathcal{L}_{\vb{u}}$ reads\footnote{We use $\mathcal{C}^1(\R^n)$ to streamline the presentation, while we formally would have to restrict ourselves either to open and bounded sets $X \subset \R^n$ to ensure $\mathcal{C}^1(X) \subset L^2(X)$ or use a weighted $L^2$-space.}
\begin{equation}\label{eq:generator}
    (\mathcal{L}_{\vb u} \psi)(\vb{x}) = \langle \nabla \psi(\vb{x}), \tilde{\vb{f}}(\vb{x}, \vb{u}) \rangle.
\end{equation}
The Koopman generator and the Koopman operator are linear operators acting on infinite-dimensional function spaces.

For modeling and control, one resorts to finite-dimensional approximations on the space $\spanop (\{ \psi_1, \dots, \psi_N \}) \subset L^2(\R^n)$
spanned by the $N$ observables~$\psi_1, \ldots, \psi_N$.
Then, an LTI approximation of the generator identity~\eqref{eq:generator} has the form
$ \dot{\vb z}(t) \approx \mb{A} \vb{z}(t) + \mb{B} \vb{u}(t),$
where $\mb{A} \in \R^{N \times N}$, $\mb{B} \in \R^{N \times m}$, and $\vb{z}(t) = [\psi_1(\vb{x}(t)), \dots, \psi_N(\vb{x}(t))]^\top \in \R^N$ is the lifted state.
For such an approximation to be consistent, System~\eqref{eq:sys} must admit a, possibly infinite-dimensional, LTI Koopman representation. The following definition is an adaptation of the invariance conditions proposed in~\cite{iacob2024,haseli2026modeling}.\footnote{The definition might be relaxed for infinite-dimensional subspaces~$\mathbb{V}$ using \cite[Lemma~2.5.3]{curtain2012introduction} and Koopman invariance of suitably defined Hilbert spaces, see, e.g., \cite[Theorem~4.2]{kohne2024infty}, where Koopman invariance of suitably generated reproducible kernel Hilbert spaces is shown.}

\begin{definition}[LTI Koopman representation]
    \label{def:koopman_lti}
    A nonlinear system \eqref{eq:sys} admits an LTI Koopman representation w.r.t. a function space~$\V \subset \mathcal{C}^1(\R^n)$
    if for all $\psi \in \V$ and $\vb u \in \mathbb{R}^m$, the inclusion $\mathcal{L}_{\vb{u}}\psi \in \V$ holds and
    \begin{equation}\label{eq:Koopman:generator} 
        (\mathcal{L}_{\vb{u}} \psi)(\vb{x}) = (\mathcal{L}_0 \psi)(\vb{x}) + \langle \mathcal{B} \psi, \vb{u} \rangle,
    \end{equation}
    where $\mathcal{L}_0$ is the Koopman generator of the uncontrolled system, i.e., $\vb u(t) \equiv 0$, and $\mathcal{B} : \V \to \R^m$ 
    is a linear operator.
\end{definition}

From \Cref{def:koopman_lti} and equation~\eqref{eq:generator} for the Koopman generator for $\mathcal{C}^1$-functions,
it is clear that the inclusion $\langle \nabla \psi, \tilde{\vb{f}}(\cdot, 0)\rangle \in \V$ holds and $\langle \nabla \psi, \tilde{\vb{f}}(\cdot, \vb{u}) - \tilde{\vb{f}}(\cdot, 0)\rangle$ is linear in $\vb{u}$ for all $\psi \in \V$.
For our analysis, we are interested in function spaces that contain the coordinate maps,
which is a standard assumption in the literature~\cite{korda2018}. Note that, for a finite-dimensional lifting,
\Cref{ass:observables} is equivalent to the existence of $\mb C\in \R^{n\times N}$ with $\vb x = \mb C\vb z(\vb x)$.
\begin{assumption}[Coordinate maps]
    \label{ass:observables}
    Let $\V \subset \mathcal{C}^1(\R^n)$ be a function space that contains the coordinate maps, 
    i.e., $(\vb{x} \mapsto x_i) \in \V$ holds for all $i \in \{1, \dots, n\}$.
\end{assumption}

With \Cref{ass:observables}, we can immediately reason that System~\eqref{eq:sys} is affine in the inputs with constant input matrix~$\mb G$.
\begin{proposition}
    \label{cor:lin_input}
    Let \Cref{ass:observables} hold and System~\eqref{eq:sys} admit an LTI Koopman representation (see \Cref{def:koopman_lti}). Then, for System~\eqref{eq:sys},
    $\tilde{\vb{f}}(x,u) = \vb f(x) + \mb G\vb u$ for some $\vb f \in \mathcal{C}^1(\R^n,\R^n)$ and constant input matrix~$\mb{G} \in \R^{n \times m}$.
\end{proposition}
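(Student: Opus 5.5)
Apply the LTI representation \eqref{eq:Koopman:generator} to the coordinate maps, which lie in $\V$ by \Cref{ass:observables}. For $\psi_i(\vb x) = x_i$ we have $\nabla \psi_i \equiv \vb e_i$, so the generator identity \eqref{eq:generator} gives
\begin{equation*}
(\mathcal{L}_{\vb u}\psi_i)(\vb x) = \tilde f_i(\vb x,\vb u), \qquad (\mathcal{L}_0\psi_i)(\vb x) = \tilde f_i(\vb x,0).
\end{equation*}
Substituting into \eqref{eq:Koopman:generator} yields, for every $\vb x\in\R^n$, $\vb u\in\R^m$ and $i\in\{1,\dots,n\}$,
\begin{equation*}
\tilde f_i(\vb x,\vb u) = \tilde f_i(\vb x,0) + \langle \mathcal{B}\psi_i, \vb u\rangle .
\end{equation*}
The key point is that $\mathcal{B}\psi_i\in\R^m$ is a fixed vector determined by $\psi_i$ alone. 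It is independent of both $\vb x$ and $\vb u$, since $\mathcal{B}$ maps $\V$ into $\R^m$ and not into a space of functions.

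Next, define $\vb f(\vb x) := \tilde{\vb f}(\vb x,0)$ and let $\mb G\in\R^{n\times m}$ be the matrix whose $i$-th row is $(\mathcal{B}\psi_i)^\top$. Stacking the $n$ scalar identities gives $\tilde{\vb f}(\vb x,\vb u) = \vb f(\vb x) + \mb G\vb u$.

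It remains to check the regularity $\vb f\in\mathcal{C}^1(\R^n,\R^n)$. This follows from the standing assumption that $\tilde{\vb f}$ is continuously differentiable in its first argument: fixing $\vb u = 0$ preserves that property. Alternatively, $\mathcal{L}_0\psi_i\in\V\subset\mathcal{C}^1(\R^n)$ by the invariance part of \Cref{def:koopman_lti}, and $\mathcal{L}_0\psi_i = f_i$.

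I expect no real obstacle here. The only subtle points are, first, that the generator formula \eqref{eq:generator} applies to the coordinate maps, which are $\mathcal{C}^1$. The footnote's caveat about $L^2$ membership is handled in the same way as elsewhere in the paper. Second, the constancy of $\mb G$ rests entirely on $\mathcal{B}$ being $\R^m$-valued.
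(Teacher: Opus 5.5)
Your proposal is correct and follows the paper's proof: you apply the representation to the coordinate maps $\psi_i(\vb x)=x_i$ and observe that $\tilde f_i(\vb x,\vb u)-\tilde f_i(\vb x,0)=\langle\mathcal{B}\psi_i,\vb u\rangle$ is linear in $\vb u$ and independent of $\vb x$. Your explicit construction of $\mb G$ from the rows $(\mathcal{B}\psi_i)^\top$ and your regularity check for $\vb f=\tilde{\vb f}(\cdot,0)$ spell out details the paper leaves implicit.
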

\begin{proof}
    Since, by \Cref{ass:observables}, $\psi_i \in \V$ with $\psi_i(\vb{x}) = x_i$, we have $\nabla \psi_i(\vb{x}) = \vb{e}^i$, i.e., the $i$-th unit 
    vector. 
    Now, the assumed LTI Koopman representation~\eqref{eq:Koopman:generator} yields 
    \begin{equation}\nonumber
        \langle\mathcal{B} \psi_i, \vb{u} \rangle = \langle \vb{e}^i, \tilde{\vb{f}}(\vb{x}, \vb{u}) - \tilde{\vb{f}}(\vb{x}, 0) \rangle = \tilde f_i(\vb{x}, \vb{u}) - \tilde f_i(\vb{x}, 0).
    \end{equation}
    Since $\langle\mathcal{B} \psi_i, \vb{u} \rangle$
    is linear in~$\vb{u}$ and does not depend on $\vb{x}$, 
    $\tilde{\vb{f}}$~is affine in the inputs with constant coefficients.
\end{proof}

\section{On LTI Koopman Representations}
\label{sec:main}

In this section, we present our main theorem, i.e., a characterization of the system structure admitting an LTI Koopman representation assuming directional excitability and \Cref{ass:observables}.
Then, we show that directional excitability is a weak controllability notion by providing three sufficient conditions. 

Since \Cref{cor:lin_input} uncovered a simpler structure of the nonlinear system, we can, without loss of generality, restrict the analysis to systems of the form
\begin{equation}
    \label{eq:sys_control_linear}
    \dot{\vb{x}}(t) = \vb{f}(\vb{x}(t)) + \mb{G}\vb{u}(t) = \vb{f}(\vb{x}(t)) + \sum\nolimits_{i=1}^m \vb{g}^i u_i(t),
\end{equation}
where $\vb{f}: \R^n \to \R^n$ is continuously differentiable, $\mb{G} \in \R^{n \times m}$ with full column rank, and $\vb g^i$, the $i$-th column of $\mb G$. 
\begin{assumption}[Directional excitability]
    \label{ass:excitability} 
    Let an invertible matrix $\mb T = [\vb{v}^1\, \dots\, \vb{v}^n] \in \R^{n \times n}$ exist with $\vb{v}^i = \vb{g}^i$ from~\eqref{eq:sys_control_linear} for $i \in \{1, \dots, m\}$, and, for $i \in \{m+1,\ldots,n\}$, $\vb{v}^i = \frac{\partial}{\partial \vb{x}}\vb{f}\left(\vb{x}^i\right) \vb{v}^j$ for some $\vb{x}^i\in \R^n$ and $j<i$.
\end{assumption}

Next, we present our main theorem, which states that under \Cref{ass:excitability}, an LTI Koopman representation exists if and only if the underlying control system is affine linear.
The proof is split into two parts. First, we show the key property that if an LTI Koopman representation exists, the function space only contains affine observables. From there, the assertion w.r.t.\ the system structure can be directly inferred. We refer to the appendix for the proof.
\begin{theorem}
    \label{thm:affine}
    Let \Cref{ass:excitability} hold. System~\eqref{eq:sys_control_linear} admits an LTI Koopman representation w.r.t.\ a function space~$\V$ satisfying \Cref{ass:observables} if and only if the system is affine linear, i.e., 
    $$
        \dot{\vb{x}}(t) = \mb{F}\vb{x}(t) + \mb{G}\vb{u}(t) + \vb{d}
    $$
    holds with $\mb{F}\in \R^{n\times n}$, $\mb{G} \in \R^{n \times m}$, and $\vb{d} \in \R^n$.
    Then, for each $\psi \in \V$, there exist coefficients $\alpha_0, \dots, \alpha_n \in \R$ such that, in addition, 
    $
        \psi(\vb{x}) = \alpha_0 + \sum\nolimits_{i=1}^n \alpha_i x_i.
    $, i.e., all $\psi \in \V$ are affine.
\end{theorem}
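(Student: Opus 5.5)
Structurally, the proof will first establish that every $\psi\in\V$ is affine, and then read off the system structure from the coordinate maps. The ``if'' direction is immediate. If $\dot{\vb x}=\mb F\vb x+\mb G\vb u+\vb d$, we take $\V$ to be the space of affine functions. For $\psi(\vb x)=\alpha_0+\boldsymbol\alpha^\top\vb x$ we get $\mathcal L_{\vb u}\psi=\boldsymbol\alpha^\top(\mb F\vb x+\vb d)+\boldsymbol\alpha^\top\mb G\vb u$. This is again affine in $\vb x$, and it has the form \eqref{eq:Koopman:generator} with $\mathcal B\psi=\mb G^\top\boldsymbol\alpha$. Hence \Cref{def:koopman_lti} and \Cref{ass:observables} hold.

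For the ``only if'' direction, let $\psi\in\V$. By \Cref{def:koopman_lti} and \eqref{eq:generator}, $\langle\nabla\psi(\vb x),\mb G\vb u\rangle=\langle\mathcal B\psi,\vb u\rangle$ for all $\vb x$. So the directional derivatives $\langle\nabla\psi(\vb x),\vb g^i\rangle$ are constants, for every $\psi\in\V$. The key device is an induction along the basis $\vb v^1,\dots,\vb v^n$ of \Cref{ass:excitability}. The claim $P(k)$ is: for every $\psi\in\V$ and every $i\le k$, the map $\vb x\mapsto\langle\nabla\psi(\vb x),\vb v^i\rangle$ is constant. The base cases $k\le m$ are exactly the observation above.

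For the induction step, take $\vb v^{k+1}=D\vb f(\vb x^{k+1})\vb v^j$ with $j\le k$, and fix $\psi\in\V$. Because $\mathcal L_0\psi\in\V$, the hypothesis applies both to $\psi$ and to $\varphi:=\mathcal L_0\psi=\langle\nabla\psi,\vb f\rangle$. Differentiate $\varphi$ in direction $\vb v^j$ (here one needs $\psi\in\mathcal C^2$, or a difference-quotient version):
\[
\langle\nabla\varphi(\vb x),\vb v^j\rangle=\langle\nabla^2\psi(\vb x)\vb v^j,\vb f(\vb x)\rangle+\langle\nabla\psi(\vb x),D\vb f(\vb x)\vb v^j\rangle .
\]
The left side is constant by $P(k)$ applied to $\varphi$. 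Also $\nabla^2\psi\,\vb v^j=\nabla\langle\nabla\psi,\vb v^j\rangle=0$, since $\langle\nabla\psi,\vb v^j\rangle$ is constant. Therefore $\langle\nabla\psi(\vb x),D\vb f(\vb x)\vb v^j\rangle\equiv c_\psi$ for all $\vb x$.

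The main obstacle is to pass from this to constancy of $\langle\nabla\psi(\vb x),\vb v^{k+1}\rangle$, because $\vb v^{k+1}$ uses the Jacobian only at the single point $\vb x^{k+1}$. The plan is to apply the identity to the coordinate maps $\psi_\ell(\vb x)=x_\ell\in\V$ first. This gives $\vb e_\ell^\top D\vb f(\vb x)\vb v^j\equiv$ const for each $\ell$, so $D\vb f(\vb x)\vb v^j$ does not depend on $\vb x$ and equals $\vb v^{k+1}$. Then for general $\psi$ we get $\langle\nabla\psi(\vb x),\vb v^{k+1}\rangle\equiv c_\psi$, which proves $P(k+1)$.

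With $P(n)$ and $\mb T$ invertible, $\nabla\psi$ is constant for every $\psi\in\V$, so each $\psi$ is affine, as asserted. Finally, $\mathcal L_0\psi_\ell=f_\ell\in\V$ for each coordinate map, so each $f_\ell$ is affine. Hence $\vb f(\vb x)=\mb F\vb x+\vb d$, and together with \Cref{cor:lin_input} this gives the stated affine linear form. The regularity needed to take second derivatives in the induction step should be handled by working along lines: constancy of $t\mapsto\langle\nabla\psi(\vb x+t\vb v^j),\vb v^j\rangle$ means $\psi$ is affine along $\vb v^j$-lines, and the derivative computation can then be carried out via difference quotients in $\vb x$ without assuming $\mathcal C^2$.
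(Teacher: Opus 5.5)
Your proof is correct and follows essentially the same route as the paper. Both argue by induction along the columns of $\mb T$ that every observable has constant directional derivatives along $\vb v^1,\dots,\vb v^n$. The paper merely works in transformed coordinates $\vb y=\mb T^{-1}\vb x$ (with $\V_{\mb T}$ and $\vb h$), and it gets constancy of $\partial_{y_j}h_i$ from $h_i\in\V_{\mb T}$, which is exactly your coordinate-map step. Your translation-invariance remark ($\psi(\vb x+t\vb v^j)=\psi(\vb x)+c\,t$ for all $\vb x$ forces $\nabla\psi(\vb x+t\vb v^j)=\nabla\psi(\vb x)$) also justifies the second-derivative manipulation that the paper performs without comment on functions assumed only to be $\mathcal C^1$.
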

\begin{remark}
    \label{rem:diffeo}
    Instead of \Cref{ass:observables}, assume that the state can be reconstructed by means of a diffeomorphism~$S$~\cite{fan2026,iacob2026} as $\vb x = S^{-1}(\vb y)$ and let $\psi_i\in \V$ hold with $\psi_i(x) = (\vb{e}^i)^\top S(x)$ for $i\in \{1, \dots, n\}$. Now, \Cref{thm:affine} yields
    $$\dot{\vb y}  = \tfrac{\partial}{\partial {\vb x}}S(S^{-1}(\vb y))f(S^{-1}(\vb y), \vb u) = \mb{F}\vb{y} + \mb{G}\vb{u} + \vb{d}$$
    for suitable matrices $\mb F$, $\mb G$ and vector $\vb d$.
\end{remark}
\begin{remark}
    If System~\eqref{eq:sys_control_linear} does not satisfy \Cref{ass:excitability}, a matrix~$\mb T$ can still be constructed in the same way but with $\operatorname{rank}\mb T <n$. To admit an LTI Koopman representation, the dynamics along the subspace spanned by the columns of~$\mb T$ must be affine linear by \Cref{thm:affine}. Along the orthogonal complement of that subspace, the dynamics are autonomous but can be nonlinear; see~\cite{shang2026existence} for the 
    finite-dimensional discrete-time case.
\end{remark}
Our characterization suggests that, for a given system, successful application of LTI Koopman modeling also indicates the potential suitability of other linear techniques, including frequency domain~\cite{schoukens2020} and direct data-driven methods~\cite{shang2024}.

In the following, we provide sufficient conditions for \textit{directional excitability}. First, we show that if the control input can influence the state along arbitrary directions for a suitably chosen initial state, directional excitability holds.

\begin{proposition}\label{prop:detectability}
    Consider System \eqref{eq:sys_control_linear}. Let, for each direction $\vb{v} \in \R^n \setminus \{0\}$, exist an initial state~$\vb{x}^0$ and $\vb{u}^1, \vb{u}^2 \in L^\infty_{\operatorname{loc}}([0,\infty),\R^m)$ such that
    $$ 
        \langle\vb{v}, \vb{x}(t; \vb{x}^0, \vb{u}^1 ) \rangle \neq \langle\vb{v}, \vb{x}(t; \vb{x}^0, \vb{u}^2 ) \rangle 
    $$
    for some time $t > 0$.
    Then, System \eqref{eq:sys_control_linear} is directionally excitable, i.e., \Cref{ass:excitability} holds.
\end{proposition}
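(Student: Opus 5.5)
The plan is to build the matrix $\mb T$ of \Cref{ass:excitability} greedily, and then use the hypothesis to show that the construction must reach dimension $n$. Start with $\vb v^i = \vb g^i$ for $i \in \{1,\dots,m\}$; these are linearly independent because $\mb G$ has full column rank. Given $\vb v^1,\dots,\vb v^k$ linearly independent, let $W_k = \spanop\{\vb v^1,\dots,\vb v^k\}$. If there exist $\vb x\in\R^n$ and $j\le k$ with $\frac{\partial}{\partial \vb x}\vb f(\vb x)\vb v^j \notin W_k$, set $\vb x^{k+1}=\vb x$ and $\vb v^{k+1} = \frac{\partial}{\partial \vb x}\vb f(\vb x)\vb v^j$. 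This keeps linear independence and respects the rule $j<k+1$ of \Cref{ass:excitability}. The procedure stops after at most $n$ steps with a subspace $W := W_k$ such that $\frac{\partial}{\partial \vb x}\vb f(\vb x)\vb v^j\in W$ for all $j\le k$ and all $\vb x$. By linearity, $\frac{\partial}{\partial \vb x}\vb f(\vb x) W\subseteq W$ for every $\vb x\in\R^n$, and also $\ran \mb G\subseteq W$. It remains to show $W=\R^n$; then $\mb T=[\vb v^1\,\dots\,\vb v^n]$ is invertible and \Cref{ass:excitability} holds.

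Suppose instead $W\neq\R^n$, and pick $\vb v\in W^\perp\setminus\{0\}$. Take $\vb x^0,\vb u^1,\vb u^2,t$ from the hypothesis, write $\vb x^k(s)=\vb x(s;\vb x^0,\vb u^k)$ for $s\in[0,t]$ (both solutions exist there, since the hypothesis refers to them at time $t$), and set $\vb e=\vb x^1-\vb x^2$. By the mean value theorem in integral form,
\begin{equation*}
\dot{\vb e}(s) = \mb M(s)\vb e(s) + \mb G(\vb u^1(s)-\vb u^2(s)), \qquad \mb M(s)=\int_0^1 \tfrac{\partial}{\partial \vb x}\vb f\big(\vb x^2(s)+\sigma \vb e(s)\big)\,d\sigma,
\end{equation*}
with $\vb e(0)=0$. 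Since $W$ is a closed subspace that is invariant under every Jacobian, it is also invariant under their average, so $\mb M(s)W\subseteq W$. Moreover, $\mb M$ is continuous in $s$, because $\vb f\in\mathcal C^1$ and the trajectories are continuous.

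Now let $\mb P$ and $\mb Q=\operatorname{Id}-\mb P$ be the orthogonal projectors onto $W$ and $W^\perp$. Because $\mb Q\mb G=0$ and $\mb Q\mb M(s)\mb P=0$, the component $\mb Q\vb e$ solves the linear homogeneous equation $\frac{d}{ds}(\mb Q\vb e)=\mb Q\mb M(s)\mb Q\vb e$ with $\mb Q\vb e(0)=0$. By uniqueness (or Grönwall's inequality, using boundedness of $\mb M$ on $[0,t]$), $\mb Q\vb e\equiv 0$. Hence $\vb e(t)\in W$, so $\langle \vb v,\vb x^1(t)\rangle=\langle\vb v,\vb x^2(t)\rangle$, which contradicts the hypothesis. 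Therefore $W=\R^n$.

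The main obstacle is this invariance step. The time-varying matrix $\mb M(s)$ depends on both trajectories, so one has to be careful that invariance of $W$ under each pointwise Jacobian really transfers to $\mb M(s)$. The integral representation above handles this, since an average of maps that preserve $W$ still preserves $W$. The greedy stopping criterion is what makes this work: it gives invariance under $\frac{\partial}{\partial \vb x}\vb f(\vb x)$ for all $\vb x$ simultaneously, not merely at the finitely many chosen points $\vb x^i$.
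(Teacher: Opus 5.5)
Your proof is correct, but its analytic step differs from the paper's. The algebraic part is the same. A non-extendable realization of the construction in \Cref{ass:excitability} spans a subspace $W\supseteq\ran\mb G$ with $\tfrac{\partial}{\partial \vb x}\vb f(\vb x)W\subseteq W$ for all $\vb x\in\R^n$. The paper writes this as $\tfrac{\partial}{\partial \vb x}\vb f(\vb x)\mb T=\mb T\mb{\Lambda}(\vb x)$ and you use the projectors $\mb P,\mb Q$; that difference is cosmetic.

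The paper then considers the first-order variation $\delta\vb x=\tfrac{d}{d\epsilon}\vb x(t;\vb x^0,\vb u+\epsilon\bar{\vb u})|_{\epsilon=0}$. It shows via the variational equation that $\delta\vb x$ stays in $W$, and stops at $\vb c^\top\delta\vb x\equiv 0$. That is only a statement about infinitesimal perturbations. The hypothesis concerns two actual trajectories, so to contradict it one still has to integrate $\delta\vb x$ along the control homotopy $\vb u^2+\epsilon(\vb u^1-\vb u^2)$, $\epsilon\in[0,1]$. This needs all intermediate solutions to exist on $[0,t]$. That can genuinely fail for nonlinear drift even when both endpoint solutions exist, and the paper does not carry out this step.

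Your exact-difference equation with the averaged Jacobian $\mb M(s)$ avoids this issue. It uses only the two trajectories that the hypothesis provides on $[0,t]$ and needs no differentiable dependence of the flow on a parameter. In exchange, it needs $W$ to be invariant under Jacobians evaluated on the segments between $\vb x^1(s)$ and $\vb x^2(s)$, not just along one trajectory. As you note, your greedy stopping criterion supplies exactly this. The paper's variational route is the standard linearization-along-trajectories argument. It also gives the extra fact that the time-varying linearization along every trajectory is uncontrollable when $W\neq\R^n$.
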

\begin{proof}
    We proof the statement by contraposition. Let $k< n$ be the largest possible number of independent vectors following the construction in \Cref{ass:excitability} and
    $$\mb{T}:= [\vb{v}^1, \dots, \vb{v}^{k}]\in \R^{n \times k}$$ 
    a corresponding realization. Since $\mb{T}$ cannot be extended, 
    the subspace spanned by the columns of~$\mb T$ is invariant under all linear maps of the form $\frac{\partial }{\partial \vb{x} }\vb{f}(\vb{x})$,
    i.e.,
    $$\tfrac{\partial }{\partial \vb{x} }\vb{f}(\vb{x})\vb v \in \operatorname{span}\{\vb{v}^1, \dots, \vb{v}^{k}\}$$
    for all $\vb v \in \operatorname{span}\{\vb{v}^1, \dots, \vb{v}^{k}\}$ and $\vb x \in \R^n$.
    Hence, there exists a matrix-valued continuous function $\mb{\Lambda}: \R^n \to \R^{k\times k}$ such that for all $\vb{\alpha}\in \R^{k}$ and all $\vb{x}\in \R^n$,
    $$\tfrac{\partial }{\partial \vb{x} }\vb{f}(\vb{x}) \mb{T} \vb{\alpha} = \mb{T} \mb{\Lambda}(\vb{x}) \vb{\alpha}.$$
    Now, let $\vb{x}(t; \vb{x}^0, \vb{u})$ be the solution on a time interval $[0, T)$ for some initial state $\vb{x}^0$ and $\vb{u}\in L^\infty_{\operatorname{loc}}([0,\infty),\R^m)$. Further, let $\bar{\vb{u}}\in L^\infty_{\operatorname{loc}}([0,\infty),\R^m)$ and $\epsilon\in \R$. The variation of the solution is given by $\delta \vb{x}(t) := \frac{d}{d\epsilon} \vb{x}(t; \vb{x}^0, \vb{u} + \epsilon \bar{\vb{u}}) |_{\epsilon = 0}$ and it can be shown that $\delta \vb{x}$ is the solution to the initial value problem
    \begin{align*}
        \tfrac{d}{dt}\delta \vb{x}(t) = \tfrac{\partial}{\partial \vb{x}} \vb{f}(\vb{x}(t; \vb{x}^0, \vb{u})) \delta \vb{x}(t) + \mb{G} \bar{\vb{u}}(t),\quad
        \delta \vb{x}(0) = 0.
    \end{align*}
    We use the ansatz $\delta \vb{x}(t) = \mb{T} \vb{\alpha}(t)$, where $\vb{\alpha}: [0, T) \to \R^{k}$ and find
    \begin{align*}
        \mb{T} \dot{\vb{\alpha}}(t) &= \tfrac{\partial}{\partial \vb{x}} \vb{f}(\vb{x}(t; \vb{x}^0, \vb{u})) \mb{T} \vb{\alpha}(t) + \mb{G} \bar{\vb{u}}(t)\\
        &= \mb{T} \mb{\Lambda}(\vb{x}(t; \vb{x}^0, \vb{u})) \vb{\alpha}(t) + \mb{T} [\vb{e}^1, \dots, \vb{e}^m]  \bar{\vb{u}}(t).
    \end{align*}
    Since $\mb{T}$ has a left inverse, we can conclude that 
    $$\dot{\vb{\alpha}}(t) = \mb{\Lambda}(\vb{x}(t; \vb{x}^0, \vb{u})) \vb{\alpha}(t) + [\vb{e}^1, \dots, \vb{e}^m]  \bar{\vb{u}}(t), \quad \vb{\alpha}(0) = 0,$$
    which is a standard inhomogeneous linear initial value problem that has a unique solution.
    Hence, $\delta \vb{x}(t) = \mb{T} \vb{\alpha}(t)$ for all $t \in [0, T)$.
    Finally, note that because $\mb{T}$ has only $k < n$ columns, there exists a vector $\vb{c} \in \R^n\setminus \{0\}$ such that $\vb{c}^\top \mb{T} = 0$. Hence, $\vb{c}^\top \delta \vb{x}(t) = 0$ for all $t \in [0, T)$, which concludes the proof.
\end{proof}

\Cref{ass:excitability} also holds
if, at some state, the linearization of System~\eqref{eq:sys_control_linear} is controllable as shown in \Cref{prop:linearization}.
\begin{proposition}
    \label{prop:linearization}
    Let, for some $\vb{x^0} \in \R^n$, the linearization of System~\eqref{eq:sys_control_linear}, $\dot{\vb{y}}(t) = \mb F \vb{y}(t) + \mb{G} \vb{u}(t)$,
    with $\mb F := \tfrac{\partial}{\partial \vb{x}} \vb{f}(\vb{x}^0)$ be controllable, i.e., $\operatorname{rank} \big[ \mb{G}, \mb F\mb{G}, \dots, \mb F^{n-1} \mb{G} \big] = n$, 
    then \Cref{ass:excitability} holds.
\end{proposition}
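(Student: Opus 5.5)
The plan is a direct construction of the matrix $\mb T$ required in \Cref{ass:excitability}, using the fixed state $\vb x^0$ at every step. We choose $\vb x^i := \vb x^0$ for all $i \in \{m+1,\dots,n\}$, so every new column has the form $\mb F \vb v^j$ with $\mb F = \tfrac{\partial}{\partial \vb x}\vb f(\vb x^0)$ and $j < i$. The task is then purely linear-algebraic: starting from $\vb v^i = \vb g^i$ for $i \le m$, we must repeatedly apply $\mb F$ to previously chosen columns and obtain $n$ linearly independent vectors.

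The construction is greedy. Set $\mathcal{S}_m := \spanop\{\vb g^1,\dots,\vb g^m\}$. This has dimension $m$ because $\mb G$ has full column rank, as stated after~\eqref{eq:sys_control_linear}. Suppose $k \ge m$ columns $\vb v^1,\dots,\vb v^k$ have been chosen, are linearly independent, and satisfy the recursion rule; let $\mathcal S_k$ be their span. If $k < n$, we claim some $j \le k$ satisfies $\mb F\vb v^j \notin \mathcal S_k$. We then set $\vb v^{k+1} := \mb F \vb v^j$, which raises the dimension by one. After $n-m$ steps, $\mb T$ is square with independent columns and hence invertible, which is exactly \Cref{ass:excitability}.

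The only real step is the claim, which we prove by contradiction. If $\mb F \vb v^j \in \mathcal S_k$ for all $j \le k$, then $\mathcal S_k$ is $\mb F$-invariant and contains $\ran \mb G$. By induction, it then contains $\ran \mb F^\ell \mb G$ for every $\ell \ge 0$. Hence $\ran[\mb G, \mb F\mb G,\dots,\mb F^{n-1}\mb G] \subseteq \mathcal S_k$, so this range has dimension at most $k < n$. This contradicts the Kalman rank condition assumed in the statement.

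I do not expect a genuine obstacle. The one point needing care is that \Cref{ass:excitability} requires each new column to be $\mb F$ applied to a single earlier column, not to an arbitrary vector of the Krylov space. The greedy choice respects this, because it always picks $\vb v^{k+1}$ as an image $\mb F\vb v^j$ of an existing column. The invariance argument shows that such an image outside $\mathcal S_k$ must exist whenever $k < n$.
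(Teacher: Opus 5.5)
Your proof is correct and follows essentially the same route as the paper. It greedily extends $\vb g^1,\dots,\vb g^m$ by images $\mb F\vb v^j$ of earlier columns, and argues that if no such image leaves the current span, then that span is $\mb F$-invariant and contains $\ran \mb G$, which contradicts the Kalman rank condition. You are slightly more explicit than the paper on two points it leaves implicit: choosing $\vb x^i := \vb x^0$, and using the full column rank of $\mb G$ to get independence of the initial columns.
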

\begin{proof}
    The first $m$ columns of the matrix $\mb T$ in \Cref{ass:excitability} are given by the columns $\vb g^1, \dots, \vb g^m$ of~$\mb G$. Now, we can iterate as follows. Let $\vb v^1, \dots, \vb v^k$ be independent vectors satisfying the construction from \Cref{ass:excitability}. If $k=n$, the construction is complete. Otherwise,
    we can find $i \in \{1, \dots, k\}$ such that $\vb v^{k+1}:= \mb F \vb v^i$ is not in the span of $\vb v^1, \dots, \vb v^k$. 
    Otherwise,
    $\operatorname{rank} \big[ \mb{G}, \mb F\mb{G}, \dots, \mb F^{n-1} \mb{G} \big] = k < n$ holds --~a contradiction to the assumed controllability.
\end{proof}

Finally, we connect directional excitability to the Lie algebra rank condition~(LARC; \cite{coron2009}).
\Cref{def:lie_algebra,def:larc} recapitulate the definitions of a Lie algebra and of LARC, respectively.

\begin{definition}[Lie algebra]
    \label{def:lie_algebra}
    Let $\mathcal M\subset \mathcal C^\infty(\R^n, \R^n)$ be a set of differentiable maps, and let
    $[\cdot, \cdot]:  \mathcal C^\infty(\R^n, \R^n) \times \mathcal C^\infty(\R^n, \R^n) \to \mathcal C^\infty(\R^n, \R^n)$ with
    $$[\vb f, \vb h](\vb x):=\tfrac{\partial}{\partial \vb x}\vb h(\vb x)\vb f(\vb x) - \tfrac{\partial}{\partial \vb x}\vb f(\vb x)\vb h(\vb x)$$
    denote the Lie bracket.
    The Lie algebra generated by $\mathcal{M}$, denoted $\operatorname{Lie}(\mathcal M)$, is the smallest linear subspace $V$ of $\mathcal C^\infty(\R^n, \R^n)$ for which the inclusion $\mathcal{M}\subset V$ holds, and $\vb f \in V$ and $\vb h \in V$ implies $[\vb f, \vb h ]\in  V $.
\end{definition}

\begin{definition}[LARC]
    \label{def:larc}
    For System~\eqref{eq:sys_control_linear}, where we assume smooth drift dynamics~$\vb f \in \mathcal C^\infty(\R^n, \R^n)$, we define the set of maps
    $ \mathcal{M} := \{\vb f, (\vb x \mapsto \vb g^1), \dots, (\vb x \mapsto \vb g^m) \}.$
    Let $\operatorname{Lie}(\mathcal M)$ denote the Lie algebra generated by~$\mathcal M$, and let $$\mathcal{A}(\vb{x}^0):= \{\vb h (\vb x^0) \mid  \vb h \in \operatorname{Lie}(\mathcal M)\}\subset \R^n.$$
    System \eqref{eq:sys_control_linear} is said to satisfy LARC at $\vb{x}^0$ if $\mathcal{A}_{\mathrm{ LA}}(\vb{x}^0) = \R^n$.
\end{definition}
\Cref{prop:lie_algebra} shows that if System~\eqref{eq:sys_control_linear} admits an LTI Koopman representation w.r.t. to a function space satisfying \Cref{ass:observables}, LARC at a controlled equilibrium implies directional excitability. 
\begin{proposition}
    \label{prop:lie_algebra}
    Let System~\eqref{eq:sys_control_linear} admit an LTI Koopman representation w.r.t. to a function space satisfying \Cref{ass:observables}. Further, let LARC hold at a controlled equilibrium $(\vb{x}^0, \vb{u}^0)$.
    Then, \Cref{ass:excitability} holds.
\end{proposition}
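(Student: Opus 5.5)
The plan is to argue by contraposition, in the same spirit as the proof of \Cref{prop:detectability}. The LTI Koopman representation together with \Cref{ass:observables} enters only through \Cref{cor:lin_input}: it guarantees that the system has the form~\eqref{eq:sys_control_linear} with \emph{constant} input vector fields $\vb g^i$. That constancy is what makes the Lie bracket computations below tractable. Throughout, $\vb f\in\mathcal C^\infty$, as required in \Cref{def:larc}.

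\textbf{Step 1 (the invariant subspace).} Suppose \Cref{ass:excitability} fails. Let $\vb v^1,\dots,\vb v^k$ with $k<n$ be a maximal family of independent vectors obtained by the construction in \Cref{ass:excitability}, and set $S:=\spanop\{\vb v^1,\dots,\vb v^k\}$. As in the proof of \Cref{prop:detectability}, maximality gives two facts. First, $\ran\mb G\subset S$. Second, $S$ is invariant under $\tfrac{\partial}{\partial\vb x}\vb f(\vb x)$ for every $\vb x\in\R^n$: otherwise some $\tfrac{\partial}{\partial\vb x}\vb f(\vb x)\vb v^j\notin S$ could be appended to the family.

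\textbf{Step 2 (closure under brackets).} Let $W$ be the set of smooth vector fields $\vb h$ with $\vb h(\vb x)\in S$ for all $\vb x$. The claim is that $\spanop\{\vb f\}+W$ is a Lie subalgebra containing $\mathcal M$.
\begin{itemize}
\item The constant fields $\vb g^i$ lie in $W$ because $\ran\mb G\subset S$.
\item For $\vb h\in W$, write $[\vb f,\vb h]=\tfrac{\partial}{\partial\vb x}\vb h\,\vb f-\tfrac{\partial}{\partial\vb x}\vb f\,\vb h$. The first term takes values in $S$, since $\vb h$ is $S$-valued and $S$ is a fixed subspace, so every directional derivative of $\vb h$ stays in $S$. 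The second term lies in $S$ by the invariance from Step 1. Hence $[\vb f,\vb h]\in W$.
\item For $\vb h_1,\vb h_2\in W$, both terms of $[\vb h_1,\vb h_2]$ are directional derivatives of $S$-valued maps, so $[\vb h_1,\vb h_2]\in W$.
\item $[\vb f,\vb f]=0$.
\end{itemize}
By bilinearity, $\spanop\{\vb f\}+W$ is a linear subspace closed under the bracket and containing $\mathcal M$. Minimality in \Cref{def:lie_algebra} then yields $\operatorname{Lie}(\mathcal M)\subset\spanop\{\vb f\}+W$.

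\textbf{Step 3 (evaluation at the equilibrium).} Evaluating at $\vb x^0$ gives $\mathcal A(\vb x^0)\subset\spanop\{\vb f(\vb x^0)\}+S$. Since $(\vb x^0,\vb u^0)$ is a controlled equilibrium, $\vb f(\vb x^0)=-\mb G\vb u^0\in\ran\mb G\subset S$. Therefore $\mathcal A(\vb x^0)\subset S\neq\R^n$, which contradicts LARC at $\vb x^0$.

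The only delicate point is Step 2, in particular the drift bracket. Its $\tfrac{\partial}{\partial\vb x}\vb h\,\vb f$ term does not vanish. One must see that it stays in $S$ merely because $\vb h$ is $S$-valued and $S$ does not depend on $\vb x$. This is exactly where constancy of the input vector fields, and hence \Cref{cor:lin_input}, is used; with state-dependent $\vb g^i$ the analogous subspace would not be fixed. The equilibrium hypothesis is needed only to absorb the stray $\vb f(\vb x^0)$ direction in Step 3.
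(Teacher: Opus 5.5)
Your proof is correct, and it takes a genuinely different route from the paper. The paper argues directly. From \Cref{lem:finV} and the LTI structure it obtains that $\langle\nabla f_i(\vb x),\vb g^j\rangle$ is constant, i.e., $\tfrac{\partial}{\partial\vb x}\vb f(\vb x)\vb g^j$ does not depend on $\vb x$. From this it identifies $\mathcal A(\vb x^0)$ with the span of $\vb f(\vb x^0)$ and the Kalman vectors $\big(\tfrac{\partial}{\partial\vb x}\vb f(\vb x^0)\big)^l\vb g^j$. Since $\vb f(\vb x^0)=-\mb G\vb u^0$, LARC at $\vb x^0$ then amounts to controllability of the linearization at $\vb x^0$, and \Cref{prop:linearization} finishes the proof. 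You instead argue by contraposition and never compute $\operatorname{Lie}(\mathcal M)$. A proper subspace $S\supset\ran\mb G$ that is invariant under all Jacobians yields the subalgebra $\spanop\{\vb f\}+W$, which traps $\mathcal A(\vb x^0)$ inside $S$ at any controlled equilibrium.

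Your route buys two things. First, it sidesteps the bracket bookkeeping, where the paper is brief. Knowing only that $\tfrac{\partial}{\partial\vb x}\vb f\,\vb g^j$ is constant does not by itself control brackets such as $[[\vb f,\vb g^i],[\vb f,[\vb f,\vb g^j]]]$. That bracket involves $\tfrac{\partial^2}{\partial\vb x^2}\vb f$ applied to the constant vectors $\tfrac{\partial}{\partial\vb x}\vb f\,\vb g^i$ and $\tfrac{\partial}{\partial\vb x}\vb f\,\vb g^j$. Controlling it needs a further induction using $\mathcal L_0\V\subset\V$. Second, your argument does not use the LTI Koopman representation at all. Your remark that it enters via \Cref{cor:lin_input} is unnecessary, since System~\eqref{eq:sys_control_linear} already has a constant input matrix by definition. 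You have therefore shown that LARC at a controlled equilibrium implies \Cref{ass:excitability} for every system of the form~\eqref{eq:sys_control_linear} with smooth drift. In exchange, the paper's route aims at a sharper structural statement: under the LTI hypothesis, $\mathcal A(\vb x^0)$ collapses to the controllable subspace of the linearization at $\vb x^0$. The proposition then becomes a special case of \Cref{prop:linearization}.
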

\begin{proof}
    From \eqref{eq:generator}, \Cref{def:koopman_lti,lem:finV}, it follows that, for all $i\in\{1,\dots, n\}$, $\vb x \in \R^n$, and $\vb u \in \R^m$,
    $\langle\nabla f_i(\vb x),\mb G\vb u\rangle \equiv \langle \mathcal B f_i,\vb u\rangle,$
    where $\mathcal B f_i\in \R^m$.
    Let $j\in\{1,\dots,m\}$, we find with $\vb u =\vb e^j$, the $j$-th unit vector, 
    $$\langle\nabla f_i(\vb x),\mb G\vb e^j\rangle = \langle\nabla f_i(\vb x),\vb g^j\rangle\equiv\alpha_{ij}$$
    for some $\alpha_{ij}\in\R$, and hence,    
    $\frac{\partial}{\partial \vb{x}}\left( \frac{\partial}{\partial \vb{x}} \vb f (\vb{x})\vb g^i \right) \equiv 0$. Thus, 
    \begin{align*}
        \mathcal A(\vb{x}^0) = &\spanop \Big\{ \vb f\left(\vb{x}^0\right), \vb g^1, \dots, \vb g^m, \\
        \tfrac{\partial}{\partial \vb{x}} &\vb f\left(\vb{x}^0\right) \vb g^1, \dots, \big(\tfrac{\partial}{\partial \vb{x}} \vb f\left(\vb{x}^0\right)\big)^2 \vb g^1, \dots \Big\}= \R^n.
    \end{align*}
    Now, due to the Cayley-Hamilton Theorem and the observation that $\vb f\left(\vb{x}^0\right) + \mb G \vb u^0 = 0$, \Cref{ass:excitability} can be fulfilled by an appropriate choice of spanning vectors of $\mathcal A(\vb{x}^0)$, as in the proof of \Cref{prop:linearization}.
\end{proof}

\section{LTI Modeling Bias}
\label{sec:bias}

This section describes how the LTI structure introduces a bias, a structural error, into the modeling of nonlinear control systems.
We identify how the bias is caused by properties of System~\eqref{eq:sys_control_linear} and the choice of observables~$\psi_1, \dots , \psi_N$.

First, we find in \Cref{prop:bias1} that a bias is introduced if an observable has curvature along an input direction, i.e.,
\begin{eqnarray}
    \label{eq:check_psi}
    \tfrac{\partial^2}{\partial \vb{x}^2}\psi_i \mb{G} \neq 0
\end{eqnarray}
for some $i \in \{1, \dots, N\}$.

\begin{proposition}
    \label{prop:bias1}
    For System \eqref{eq:sys_control_linear}, let the lifting be defined as $\vb z(\vb x):= [\psi_1(\vb x), \dots , \psi_N(\vb x)]^\top$ with $\psi_i \in \mathcal{C}^2(X)$ for $i \in \{1, \dots, N\}$, and let $X \subset \R^n$ be a convex and bounded set. Further, let $\mb{A} \in \R^{N \times N}$ and $\mb{B} \in \R^{N \times m}$ be the matrices of the LTI Koopman approximation $\dot{\vb{z}} \approx \mb{A}\vb{z}+ \mb{B}\vb u$.
    And, assume there exists~$\vb x^0 \in X$ with $\mb B = \frac{\partial}{\partial \vb x}\vb z(\vb x^0)\mb G$.
    Then, it holds
    for an observable $\psi(\vb x) = \vb a^\top \vb z(\vb x)$ with $\vb{a} \in \R^N$,
    \begin{align*}
        \left|\mathcal L_{\vb u}\psi(\vb x) - \vb a^\top (\mb{A}\vb{z} + \mb{B}\vb u)\right| \le 
        \left| \mathcal L_0 \psi(\vb x)- \vb a^\top \mb{A}\vb{z}(\vb x) \right| \\+ \diam(X) \sup_{\vb \xi \in X} \left\|\tfrac{\partial^2}{\partial \vb{x}^2}\psi(\vb \xi) \mb{G} \right\| \|\vb{u}\|
    \end{align*}
    for all $\vb x \in X$ and $\vb u \in \R^m$.
\end{proposition}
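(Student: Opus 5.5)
The plan is to split the generator of the control-affine system into a drift part and an input part, and to bound each separately. Since $\psi = \vb a^\top \vb z$ is $\mathcal C^2$, identity~\eqref{eq:generator} gives for all $\vb x\in X$ and $\vb u\in\R^m$
\begin{equation*}
    \mathcal L_{\vb u}\psi(\vb x) = \mathcal L_0\psi(\vb x) + \nabla\psi(\vb x)^\top \mb G \vb u .
\end{equation*}
Subtracting $\vb a^\top(\mb A\vb z(\vb x) + \mb B\vb u)$ and applying the triangle inequality bounds the left-hand side of the claim by
\begin{equation*}
    \left|\mathcal L_0\psi(\vb x) - \vb a^\top\mb A\vb z(\vb x)\right| + \left|\big(\nabla\psi(\vb x)^\top\mb G - \vb a^\top\mb B\big)\vb u\right| .
\end{equation*}
The first term already matches the right-hand side, so only the input term remains.

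For the input term I use the hypothesis $\mb B = \tfrac{\partial}{\partial\vb x}\vb z(\vb x^0)\mb G$. By linearity, $\vb a^\top \tfrac{\partial}{\partial \vb x}\vb z(\vb x^0) = \nabla\psi(\vb x^0)^\top$, hence
\begin{equation*}
    \vb a^\top\mb B = \nabla\psi(\vb x^0)^\top\mb G ,
\end{equation*}
and the input term becomes $|(\nabla\psi(\vb x)-\nabla\psi(\vb x^0))^\top\mb G\vb u|$. Since $X$ is convex, the segment $\vb\xi(s)=\vb x^0+s(\vb x-\vb x^0)$, $s\in[0,1]$, lies in $X$. The fundamental theorem of calculus applied to $s\mapsto \nabla\psi(\vb\xi(s))^\top\mb G$, using symmetry of the Hessian, gives
\begin{equation*}
    \big(\nabla\psi(\vb x)-\nabla\psi(\vb x^0)\big)^\top\mb G = (\vb x-\vb x^0)^\top\int_0^1 \tfrac{\partial^2}{\partial\vb x^2}\psi(\vb\xi(s))\,\mb G\,ds .
\end{equation*}

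Taking norms and using Cauchy--Schwarz bounds the input term by
\begin{equation*}
    \|\vb x-\vb x^0\|\,\sup_{\vb\xi\in X}\big\|\tfrac{\partial^2}{\partial\vb x^2}\psi(\vb\xi)\mb G\big\|\,\|\vb u\| .
\end{equation*}
Finally, $\|\vb x-\vb x^0\|\le\diam(X)$ because both points lie in $X$, which yields the claim.

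The only step needing care is the mean-value/integral representation. It requires that the whole segment stays in $X$, which is exactly what convexity provides, and that $\psi\in\mathcal C^2$ on it. The supremum may be infinite if $X$ is not closed, but then the bound holds trivially. Beyond this the argument is routine.
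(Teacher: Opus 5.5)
Your proof is correct and follows the paper's argument. It uses the same triangle-inequality split into drift and input parts, the same first-order expansion of $\nabla\psi^\top\mb G$ around the point $\vb x^0$ where $\vb a^\top\mb B = \nabla\psi(\vb x^0)^\top\mb G$, and the bound $\|\vb x-\vb x^0\|\le\diam(X)$. The only differences are cosmetic: you use the integral form of the mean value theorem along the segment, which avoids choosing a $\vb\xi$ that depends on $\vb u$, and you substitute the given $\vb x^0$ directly instead of taking the paper's infimum over $\vb x^0$.
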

\begin{proof}
    First, we decompose the approximation error in an autonomous and an input-dependent part as follows:
    \begin{align*}
        &|\mathcal L_{\vb u}\psi(\vb x) - \vb a^\top (\mb{A}\vb{z} + \mb{B}\vb u)| \\
        &\le \left| \mathcal{L}_0 \psi(\vb x) - \vb a^\top \mb{A}\vb{z}(\vb x) \right| + \left| \tfrac{\partial}{\partial \vb{x}} \psi(\vb x) \mb{G} \vb u - \vb a^\top \mb{B} \vb u \right|.
    \end{align*}
    Now, by Taylor's Theorem, for all $\vb x_0, \vb x \in X$, there exists~$\vb\xi \in X$ 
    such that
       $$ \tfrac{\partial}{\partial \vb{x}} \psi(\vb x) \mb{G} \vb u = \tfrac{\partial}{\partial \vb{x}} \psi(\vb x_0) \mb{G} \vb u + (\vb x - \vb x_0)^\top \tfrac{\partial^2}{\partial \vb{x}^2}\psi(\vb \xi) \mb{G} \vb u,$$
    and, hence,
    \begin{align*}
        | &\tfrac{\partial}{\partial \vb{x}} \psi(\vb x) \mb{G} \vb u - \vb a^\top \mb{B} \vb u | \\ &\le \left| \tfrac{\partial}{\partial \vb{x}} \psi(\vb x_0) \mb{G} \vb u - \vb a^\top \mb{B} \vb u \right| + \left| (\vb x - \vb x_0)^\top \tfrac{\partial^2}{\partial \vb{x}^2}\psi(\vb \xi) \mb{G} \vb u\right|.
    \end{align*}
    Taking the infimum over $\vb x^0 \in X$ and the supremum over $\vb \xi \in X$ and noting that
    \begin{align*}
        \inf_{\vb x^0 \in X} \left\| \tfrac{\partial}{\partial \vb{x}} \psi(\vb x^0) \mb{G} - \vb a^\top \mb{B} \right\| &\le \|\vb a^\top(\tfrac{\partial}{\partial \vb x}\vb z(\bar{\vb x})\mb G -\mb B)\| = 0
    \end{align*}
    yields the result.
\end{proof}

Note that the assumption in \Cref{prop:bias1} can be easily satisfied by setting $\mb B := \frac{\partial}{\partial \vb x}\vb z(\vb x^0)\mb G$. If, instead, $\mb B$~is the result of solving a minimization problem as in~\cite{korda2018}, the additional error can still assumed to be small.

\begin{example} 
    \label{ex:sms}
    Consider the
    actuated slow-manifold system
\begin{equation*}
    \frac{d}{dt }
    \begin{bmatrix}
        x_1 \\
        x_2
    \end{bmatrix}
    = \vb f (\vb x) + \vb g u=
    \begin{bmatrix}
        x_1 - x_2^2\\
        x_2
    \end{bmatrix} +
    \begin{bmatrix}
        1 \\
        0
    \end{bmatrix}
    u
    ,
\end{equation*}
where the input enters the first component.
It is easy to see that it admits a finite-dimensional LTI Koopman representation
w.r.t. the function space spanned by $\psi_1(\vb x) = x_1$, $\psi_2(\vb x) = x_2$, and $\psi_3(\vb x) = x_2^2$ as shown in~\cite{proctor2018generalizing}.
The only observable with non-zero curvature is $\psi_3$, for which
$\tfrac{\partial^2}{\partial \vb{x}^2}\psi_3 \vb g = 0.$
Note, however, that the input never influences $x_2$, and, therefore, the system is not directionally excitable.
\end{example}

\begin{figure}[t]
    \centering
    \input{figures/brunton_error}
    \caption{Mean EDMDc approximation error for \Cref{ex:sms_excitable} for varying polynomial degree of observables.}
    \label{fig:edmd_training_error_brunton}
\end{figure}
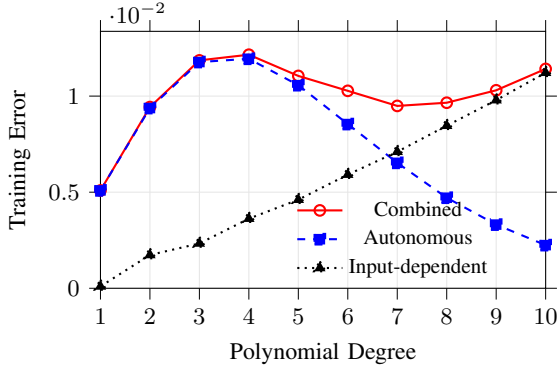

While the autonomous approximation error in \Cref{prop:bias1} can generally be reduced by incorporating more expressive observables, the input-dependent approximation error might increase at the same time, as demonstrated in \Cref{ex:sms_excitable}.
\begin{example}
    \label{ex:sms_excitable}
    If, compared to 
    \Cref{ex:sms}, the input enters the second component, i.e., 
    \begin{equation*}
        \frac{d}{dt}
        \begin{bmatrix}
            x_1 \\
            x_2
        \end{bmatrix}
        = \vb f (\vb x) + \vb g u=
        \begin{bmatrix}
            x_1 - x_2^2\\
            x_2
        \end{bmatrix}
        +
        \begin{bmatrix}
            0 \\
            1
        \end{bmatrix}
        u,
    \end{equation*}
    the system is directionally excitable by~\Cref{prop:detectability}. Hence, by \Cref{thm:affine}, it cannot admit an LTI Koopman representation.
    With the same observables as in \Cref{ex:sms}, we find that
    $ \tfrac{\partial^2}{\partial \vb{x}^2}\psi_3 \vb g =[0,2]^\top.$
    This choice of observables would therefore lead to an input-dependent approximation error.\\
    For data-based LTI Koopman modeling,\footnote{We use extended Dynamic Mode Decomposition with control (EDMDc)~\cite{proctor2016a,
    korda2018} with monomial observables, 10,000 samples of the state and input, drawn uniformly at random from $X = [-1, 1]^n$ and $U = [-0.1, 0.1]^m$. The continuous-time nonlinear system is discretized with a sampling time of $0.01$. To calculate the error, we scale the observables by their mean value over the training data to avoid scaling dependencies, calculate the maximum error at each sample, and average the result.}
    \Cref{fig:edmd_training_error_brunton} shows the approximation error for varying maximal polynomial degree.
    We observe that, after an initial increase, the error of the autonomous part decreases with higher polynomial degrees, while the error of the input-dependent part increases. This is in line with the theoretical results from \Cref{prop:bias1}. In particular, since the second derivatives of polynomials scale linearly with their degree, the input-dependent error increases linearly with the degree as well.
\end{example}

To reduce the input-dependent approximation error, one may restrict the function space~$\V$ to observables that are affine in the input directions, i.e., $\frac{\partial^2}{\partial \vb{x}^2}\psi \mb{G} \equiv 0$ for all~$\psi \in~\V$, which was proposed in~\cite{asada2024}.
Next, we show
that this restriction introduces another bias stemming from
\begin{itemize}
    \item curvature of the drift dynamics\footnote{Here and in the following analysis, we assume that the drift dynamics are twice continuously differentiable.} along the input directions, i.e., there exists $j \in \{1,\dots,m\}$ such that
    \begin{eqnarray}
        \label{eq:check_f}
        \tfrac{\partial^2}{\partial \vb x^2} f_j \mb G \neq 0,
    \end{eqnarray}
    \item dynamics-dependent coupling of the second derivatives of the observables with the input directions, i.e., there exists $i \in \{1, \dots, N\}$ such that
    \begin{eqnarray}
        \label{eq:check_coupling}
        \tfrac{\partial^2}{\partial \vb{x}^2}\psi_i \tfrac{\partial}{\partial \vb x} \vb f \mb G \neq 0.
    \end{eqnarray}
\end{itemize}

 To state \Cref{prop:bias2},
\Cref{ass:restricted_V} formalizes the restriction of the function space~$\V$.
It specifies that for all functions $\varphi \in \mathcal{C}^2(\R^n)$, we find~$\widehat \varphi\in\V$ that represents~$\varphi$ perfectly along the non-input directions conditioned on the imposed restriction.

\begin{assumption}
    \label{ass:restricted_V}
    Let~$\mb G\in \R^{n\times m}$ with full column rank be given.
    Let $\V \subset \mathcal{C}^2(\R^n)$ satisfy $\frac{\partial^2}{\partial \vb{x}^2}\psi \mb{G} \equiv 0$ 
    for all $\psi \in \V$.
    Given~$\varphi\in \mathcal{C}^2(\R^n)$, there exists a function~$\widehat \varphi\in\V$ with the following properties for all $\vb x^0 \in (\ran \mb G)^\perp$ and $\vb x \in \R^n$:
    \begin{enumerate}[(P1)]
        \item $\widehat{\varphi}(\vb x^0) = \varphi(\vb x^0)$ \vspace{0.1cm}
        \item $\tfrac{\partial}{\partial \vb x} \widehat{\varphi}(\vb x^0) = \tfrac{\partial}{\partial \vb x} \varphi(\vb x^0)$ \vspace{0.1cm}
        \item $(\mb I - \mb G \mb G^\dagger)^\top \big(\tfrac{\partial^2}{\partial \vb x^2} \widehat{\varphi}(\vb x) - \tfrac{\partial^2}{\partial \vb x^2} \varphi(\vb x)\big)(\mb I - \mb G \mb G^\dagger) = 0$\\[0.1cm] 
        with pseudoinverse $\mb G^\dagger := (\mb{G}^\top \mb{G})^{-1} \mb{G}^\top$ of $\mb{G}$. 
    \end{enumerate}
\end{assumption}

\begin{proposition}
    \label{prop:bias2}
    Let the system be given by \eqref{eq:sys_control_linear} and let 
    $\V\subset \mathcal{C}^2(\R^n)$ 
    satisfy \Cref{ass:restricted_V}.
    Further, let 
    $X\subset \R^n$ be a convex and bounded set, $\vb u \in \R^m$, $\psi \in \V$, and $\widehat{\varphi} \in \V$ the approximation of~$\mathcal{L}_{\vb{u}} \psi$ as in \Cref{ass:restricted_V}.
    Then,
    \begin{align*}
        &|\mathcal{L}_{\vb{u}} \psi(\vb x) - \widehat \varphi(\vb x) | \le \diam(X)^2 \|\mb G^\dagger\| \sup_{\vb x \in X} \left\|\tfrac{\partial^2}{\partial \vb{x}^2}\mathcal{L}_{\vb{u}} \psi(\vb x) \mb G \right\|
    \end{align*}
    for all $\vb x \in X$, and
    \begin{align}
        \nonumber
        \tfrac{\partial^2}{\partial \vb x^2} \mathcal{L}_{\vb{u}} \psi(\vb x) \mb G = &\sum_{i=1}^{n} \tfrac{\partial}{\partial x_i}\psi(\vb x)\tfrac{\partial^2}{\partial \vb x^2} f_i(\vb x) \mb G\\
        \label{eq:curvature}
        &+ \tfrac{\partial^2}{\partial \vb x^2} \psi(\vb x) \tfrac{\partial}{\partial \vb x} \vb f(\vb x) \mb G.
    \end{align}
\end{proposition}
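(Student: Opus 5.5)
The plan is to treat the two assertions separately: first derive the curvature identity \eqref{eq:curvature}, then obtain the error bound from a second-order Taylor expansion along the input directions. Throughout, set $\varphi := \mathcal{L}_{\vb u}\psi = \langle \nabla\psi, \vb f + \mb G\vb u\rangle$, let $e := \varphi - \widehat\varphi$, and let $\mb P := \mb I - \mb G\mb G^\dagger$, the orthogonal projector onto $(\ran \mb G)^\perp$. I will assume $\psi \in \mathcal{C}^3$ and $\vb f \in \mathcal{C}^2$, so that all the derivatives below exist.

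\emph{Curvature identity.} Write $\varphi = \sum_i \partial_i\psi\, f_i + \sum_i \partial_i\psi\,(\mb G\vb u)_i$. Differentiating twice gives
\begin{equation*}
\tfrac{\partial^2}{\partial \vb x^2}\varphi = \sum_i \Big( \partial_i\psi\, \tfrac{\partial^2}{\partial \vb x^2} f_i + \nabla\partial_i\psi\, \nabla f_i^\top + \nabla f_i\, \nabla\partial_i\psi^\top + (f_i + (\mb G\vb u)_i)\, \tfrac{\partial^2}{\partial \vb x^2}\partial_i\psi \Big).
\end{equation*}
Now multiply on the right by $\mb G$. Since $\psi \in \V$, we have $\tfrac{\partial^2}{\partial \vb x^2}\psi\,\mb G \equiv 0$. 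Differentiating this identity in $x_i$ shows $\tfrac{\partial^2}{\partial \vb x^2}(\partial_i\psi)\,\mb G \equiv 0$, because third derivatives are symmetric. This kills the last group, which also disposes of the entire $\vb u$-dependent part. The third group becomes $\nabla f_i\,(\nabla\partial_i\psi^\top \mb G)$, and $\nabla\partial_i\psi^\top\mb G$ is the $i$-th row of $\tfrac{\partial^2}{\partial \vb x^2}\psi\,\mb G = 0$, so it vanishes too. Finally, $\sum_i \nabla\partial_i\psi\,\nabla f_i^\top \mb G = \tfrac{\partial^2}{\partial \vb x^2}\psi\,\tfrac{\partial}{\partial \vb x}\vb f\,\mb G$. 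Together with the first group, this yields \eqref{eq:curvature}.

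\emph{Error bound.} By \Cref{ass:restricted_V}, $\tfrac{\partial^2}{\partial \vb x^2}\widehat\varphi\,\mb G \equiv 0$, so $\tfrac{\partial^2}{\partial \vb x^2}e\,\mb G = \tfrac{\partial^2}{\partial \vb x^2}\varphi\,\mb G$. Moreover, (P1) and (P2) state that $e$ and $\nabla e$ vanish on $(\ran\mb G)^\perp$. For $\vb x \in X$, decompose $\vb x = \vb x^0 + \mb G\vb w$ with $\vb x^0 := \mb P\vb x \in (\ran\mb G)^\perp$ and $\vb w := \mb G^\dagger\vb x$. Taylor's theorem along the segment from $\vb x^0$ to $\vb x$ gives, for some $\vb\xi$ on that segment,
\begin{equation*}
e(\vb x) = \tfrac12\, \vb w^\top \mb G^\top \tfrac{\partial^2}{\partial \vb x^2}\varphi(\vb\xi)\,\mb G\vb w .
\end{equation*}
Here the zeroth- and first-order terms drop out by (P1) and (P2). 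Consequently,
\begin{equation*}
|e(\vb x)| \le \tfrac12\,\|\mb G\vb w\|\,\Big\|\tfrac{\partial^2}{\partial \vb x^2}\varphi(\vb\xi)\mb G\Big\|\,\|\vb w\| .
\end{equation*}
We have $\|\mb G\vb w\| = \|\vb x - \vb x^0\|$, and $\|\vb w\| = \|\mb G^\dagger \mb G\vb w\| \le \|\mb G^\dagger\|\,\|\vb x-\vb x^0\|$. This gives the claimed form of the bound, even with an extra factor $\tfrac12$. Property (P3) is not needed for the estimate itself; it only guarantees that $\widehat\varphi$ is the natural representative.

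\emph{Main obstacle.} The delicate step is controlling $\|\vb x - \vb x^0\|$ by $\diam(X)$ and keeping $\vb\xi$ inside $X$, so that the supremum is really over $X$. This requires the projection $\mb P\vb x$ to lie in $X$. My first attempt would be to note that the segment from $\vb x^0$ to $\vb x$ lies in the affine fibre $\vb x^0 + \ran\mb G$, and that convexity of $X$ then keeps $\vb\xi \in X$ whenever $X \cap (\ran\mb G)^\perp$ meets each such fibre. Failing that, I would state the result under the assumption that $\mb P X \subset X$ (e.g., $X$ is a box or ball centred at the origin and aligned appropriately), or else enlarge the supremum to the convex hull of $X \cup \mb P X$.
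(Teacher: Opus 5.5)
Your derivation of \eqref{eq:curvature} is correct and is the computation the paper alludes to. The gap is in the error bound, at the step you flag yourself. Choosing the base point $\vb x^0=\mb P\vb x$ makes $\vb x-\vb x^0\in\ran\mb G$, which is why (P3) looks dispensable. But then the Taylor point $\vb\xi$ lies on the segment $[\mb P\vb x,\vb x]$, which in general leaves $X$, so you do not get the supremum over $X$. Your first repair is no weaker than your second: the fibre $\mb P\vb x+\ran\mb G$ meets $(\ran\mb G)^\perp$ only in $\mb P\vb x$, so that condition is again $\mb PX\subset X$. The remaining repairs change the statement.

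This cannot be patched while keeping (P3) out, because (P1), (P2) and $\tfrac{\partial^2}{\partial\vb x^2}\widehat\varphi\,\mb G\equiv0$ alone do not imply the estimate. Take $n=2$, $\mb G=\vb e^2$, $X=\{\vb x\in\R^2 : 0\le x_1\le 1,\ |x_1-x_2|\le 0.01\}$, $\psi(\vb x)=x_1$, and $\vb f=(\varphi,0)^\top$ with $\varphi(\vb x)=c(x_1)\theta(x_2)$. Here $c,\theta$ are smooth, $c=0$ on $(-\infty,0.3]$, $c=1$ on $[0.5,\infty)$, $\theta=1$ on $(-\infty,0.2]$ and $\theta=0$ on $[0.25,\infty)$. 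Then $\mathcal L_{\vb u}\psi=\varphi$ vanishes on $X$ together with its Hessian, so the right-hand side of the bound is $0$. The function $\widehat\varphi(\vb x):=c(x_1)$ satisfies (P1), (P2) and $\tfrac{\partial^2}{\partial\vb x^2}\widehat\varphi\,\mb G\equiv0$, yet $|\varphi-\widehat\varphi|=1$ at $(1,1)\in X$. Only (P3) fails.

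The missing idea is the actual role of (P3). The paper takes the base point $\vb x^0\in(\ran\mb G)^\perp$ inside $X$. It tacitly assumes $X$ meets this subspace, which your argument also needs to get $\|\vb x-\mb P\vb x\|\le\diam(X)$. Convexity then puts $[\vb x^0,\vb x]$, hence $\vb\xi$, in $X$, and $\vb d:=\vb x-\vb x^0$ satisfies $\|\vb d\|\le\diam(X)$. The price is that $\vb d$ now has a component $\mb P\vb d$ outside $\ran\mb G$, and (P3) is exactly what annihilates $(\mb P\vb d)^\top\tfrac{\partial^2}{\partial\vb x^2}e(\vb\xi)\,\mb P\vb d$. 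Set $\mb Q:=\mb G\mb G^\dagger$ and use $\tfrac{\partial^2}{\partial\vb x^2}e\,\mb Q=\tfrac{\partial^2}{\partial\vb x^2}\varphi\,\mb Q$ together with the symmetry of the Hessian of $e$. This gives $\vb d^\top\tfrac{\partial^2}{\partial\vb x^2}e(\vb\xi)\,\vb d=\big((2\mb I-\mb Q)\vb d\big)^\top\tfrac{\partial^2}{\partial\vb x^2}\varphi(\vb\xi)\,\mb G\mb G^\dagger\vb d$. Its modulus is at most $2\|\mb G^\dagger\|\,\|\tfrac{\partial^2}{\partial\vb x^2}\varphi(\vb\xi)\mb G\|\,\|\vb d\|^2$, and with the factor $\tfrac12$ from Taylor this is the stated bound. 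Your projection argument is a valid proof, with the sharper constant $\tfrac12$, only under the extra hypothesis $\mb PX\subset X$.
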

\begin{proof}
    Let $\vb x \in X$ and $\vb x^0 \in (\ran \mb G)^\perp$ be given and $\varphi := \mathcal{L}_{\vb{u}} \psi$. Then, with \Cref{ass:restricted_V} (P1) and (P2), Taylor series expansion yields that there exists~$\vb \xi \in X$ satisfying
    \begin{align*}
        \varphi(\vb x) -& \widehat \varphi(\vb x) \\
        =&\varphi(\vb x^0) - \widehat \varphi(\vb x^0)+\big( \tfrac{\partial}{\partial \vb x} \varphi(\vb x^0) - \tfrac{\partial}{\partial \vb x} \widehat \varphi(\vb x^0) \big)(\vb x - \vb x^0)\\
        &+ \tfrac{1}{2} (\vb x - \vb x^0)^\top \big(\tfrac{\partial^2}{\partial \vb x^2} \varphi(\vb \xi) -\tfrac{\partial^2}{\partial \vb x^2} \widehat\varphi(\vb \xi)\big) (\vb x - \vb x^0)\\
        =&\tfrac{1}{2} (\vb x - \vb x^0)^\top \big(\tfrac{\partial^2}{\partial \vb x^2} \varphi(\vb \xi) -\tfrac{\partial^2}{\partial \vb x^2} \widehat\varphi(\vb \xi)\big) (\vb x - \vb x^0).
    \end{align*}
    Moreover, by \Cref{ass:restricted_V} (P3)
    \begin{align*}
        &\tfrac{\partial^2}{\partial \vb x^2} \varphi -\tfrac{\partial^2}{\partial \vb x^2} \widehat\varphi \\
        &= (\mb G \mb G^\dagger + \mb I - \mb G \mb G^\dagger)^\top\big(\tfrac{\partial^2}{\partial \vb x^2} \varphi -\tfrac{\partial^2}{\partial \vb x^2} \widehat\varphi \big)(\mb G \mb G^\dagger + \mb I - \mb G \mb G^\dagger)\\
        &= (\mb G \mb G^\dagger)^\top \tfrac{\partial^2}{\partial \vb x^2} \varphi (2 \mb I - \mb G \mb G^\dagger).
    \end{align*}
    Therefore, we can bound the approximation error as follows:
    \begin{align*}
        |&\varphi(\vb x) - \widehat \varphi(\vb x)| \\&\le \tfrac{1}{2} \|\vb x - \vb x^0\|^2 \sup_{\vb x \in X} \left\|(\mb G \mb G^\dagger)^\top \tfrac{\partial^2}{\partial \vb x^2} \varphi(\vb x) (2 \mb I - \mb G \mb G^\dagger) \right\| \\
        &\le \diam(X)^2 \|\mb G^\dagger\| \sup_{\vb x \in X} \left\|\tfrac{\partial^2}{\partial \vb{x}^2}\varphi(\vb x) \mb G \right\|.
    \end{align*}
    Computing the second derivative of $\varphi$ while exploiting that $\frac{\partial^2}{\partial \vb{x}^2}\psi \mb{G} \equiv 0$ yields the mentioned expression~\eqref{eq:curvature}.    
\end{proof}

\begin{example}
    For the system defined in \Cref{ex:sms_excitable}, we have that
    $$ \tfrac{\partial^2}{\partial \vb x^2} f_1(\vb x)\vb g =
    \begin{bmatrix}
        0 & 0 \\
        0 & -2
    \end{bmatrix} 
    \begin{bmatrix}
            0 \\
            1
        \end{bmatrix} = \begin{bmatrix}
            0 \\
            -2
        \end{bmatrix}
    \neq 0,$$
    indicating that the $x_1$ dynamics have curvature that cannot be captured by observables that are affine along the input direction.
\end{example}

\begin{example}
    \label{ex:triple}
    The three-dimensional system
    \begin{equation*}
        \frac{d}{dt}
        \begin{bmatrix}
            x_1 \\
            x_2 \\
            x_3
        \end{bmatrix} = \vb f (\vb x) + \vb g u=
        \begin{bmatrix}
            x_2^2 \\
            x_3 \\
            0
        \end{bmatrix}
        + 
        \begin{bmatrix}
            0\\
            0 \\
            1
        \end{bmatrix}u
        ,
    \end{equation*}
    is directionally excitable, and, hence, does not admit an LTI Koopman representation including the coordinate maps. When we choose $\psi_1(\vb x) = x_1$, $\psi_2(\vb x) = x_2$, $\psi_3(\vb x) = x_3$, and $\psi_4(\vb x) = x_2^2$,
    we find $\tfrac{\partial^2}{\partial \vb{x}^2}\psi_i \vb g = 0$ for all $i \in \{1, \dots, 4\}$ and $\tfrac{\partial^2}{\partial \vb x^2} f_j \vb g = 0$ for all $j \in \{1, 2, 3\}$. However, the coupling of the input and non-input directions through the dynamics results in non-zero curvature of $\mathcal L_{\vb u} \psi_4$ along the input direction since
    \begin{align*}
        \tfrac{\partial^2}{\partial \vb{x}^2}\psi_4(\vb x) &\tfrac{\partial}{\partial \vb x} \vb f(\vb x) \vb g = 
        [0,2,0]^\top
        \neq 0.
    \end{align*}
\end{example}

From a practical point of view, 
it is advisable to include observables that evolve almost linearly with the inputs, such that~\eqref{eq:check_psi} and~\eqref{eq:check_coupling} become approximate equalities. Moreover, LTI Koopman modeling is likely to fail if the system has strong inherent nonlinear input dependencies leading to~\eqref{eq:check_f}.

\begin{remark}
    The results and conditions from this section can be analogously derived for the discrete-time case for systems of the form $\vb x^+ = \vb f(\vb x) + \mb G \vb u$ and discrete-time LTI Koopman approximations $\vb z^+ \approx \mb A \vb z + \mb B \vb u$. 
\end{remark}

\section{Conclusion}

We have shown that directionally excitable non-affine systems cannot admit an LTI Koopman representation including the coordinate maps.
We analyzed the bias introduced by the LTI structure and derived conditions that can be checked to assess the suitability of an LTI Koopman approximation.
Our analysis suggests that LTI Koopman modeling for nonlinear control systems should always be accompanied by an analysis of the modeling bias, following the presented conditions.

\appendix
\label{sec:proof}

\noindent\textbf{Proof of Theorem~\ref{thm:affine}.} First, we proof that \Cref{ass:observables} implies the inclusion $f_i \in \V$, $i \in \{1, \dots, n\}$, for the drift dynamics of System~\eqref{eq:sys_control_linear}.
\begin{lemma}
    \label{lem:finV}
    Let System~\eqref{eq:sys_control_linear} admit an LTI Koopman representation w.r.t.\ a function space~$\V$ satisfying \Cref{ass:observables}. Then, $f_i \in \V$ holds for all $i \in \{1, \dots, n\}$.
\end{lemma}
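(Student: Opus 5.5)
The plan is to apply the generator identity~\eqref{eq:generator} to the coordinate maps guaranteed by \Cref{ass:observables} and then use the closure property $\mathcal{L}_{\vb u}\psi \in \V$ from \Cref{def:koopman_lti} with $\vb u = 0$.

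First, let $\psi_i(\vb x) = x_i$, which lies in $\V$ by \Cref{ass:observables}. Its gradient is $\nabla\psi_i \equiv \vb e^i$. Evaluating~\eqref{eq:generator} for System~\eqref{eq:sys_control_linear} with the constant input $\vb u = 0$ gives
\begin{equation*}
    (\mathcal{L}_0 \psi_i)(\vb x) = \langle \vb e^i, \vb f(\vb x) + \mb G \cdot 0\rangle = f_i(\vb x).
\end{equation*}
Second, \Cref{def:koopman_lti} requires $\mathcal{L}_{\vb u}\psi \in \V$ for every $\psi\in\V$ and every $\vb u\in\R^m$, and in particular for $\vb u = 0$. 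Hence $f_i = \mathcal{L}_0\psi_i \in \V$ for each $i\in\{1,\dots,n\}$, which is the claim.

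There is essentially no obstacle in this argument. The only point needing care is that~\eqref{eq:generator} is applicable, i.e., that $\psi_i$ lies in the domain of the generator. This holds because $\psi_i\in\mathcal{C}^1(\R^n)$, modulo the $L^2$ caveat already addressed in the footnote of the paper. One should also note that $f_i$ is automatically $\mathcal{C}^1$, since $\V\subset\mathcal{C}^1(\R^n)$, which is consistent with the standing assumption on $\vb f$.

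Alternatively, one could take an arbitrary $\vb u$ and split $\mathcal{L}_{\vb u}\psi_i = f_i + \langle \mathcal{B}\psi_i,\vb u\rangle$ using~\eqref{eq:Koopman:generator}. This route would additionally require the constant functions to lie in $\V$, so the choice $\vb u = 0$ avoids that issue and is the one I would use.
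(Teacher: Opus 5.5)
Your proof is correct and follows the paper's argument: the paper also takes the coordinate map $\psi(\vb x) = x_i$ from \Cref{ass:observables} and concludes $f_i = \langle \nabla\psi, \vb f\rangle = \mathcal{L}_0\psi \in \V$ from the closure property in \Cref{def:koopman_lti} at $\vb u = 0$. Your remark that choosing $\vb u = 0$ avoids needing constant functions in $\V$ is a sound clarification of a point the paper leaves implicit.
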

\begin{proof}
    Let an index $i \in \{1,\ldots,n\}$ be given. Then, \Cref{ass:observables} yields $\psi \in \V$ for $\psi(\vb{x}) = x_i$. It follows that
    $$ 
        f_i = \langle\vb{e}^i, \vb{f}\rangle = \langle\nabla \psi, \vb{f}\rangle \in \V
    $$
    Since the index~$i$ was arbitrary, the assertion follows. 
\end{proof}
Next, we iteratively show 
that all observables in~$\V$ are affine along the column vectors of the matrix~$\mb T$ from \Cref{ass:excitability}.
\begin{lemma}
    \label{lem:prop_VT}
    Let a system given by \eqref{eq:sys_control_linear} and satisfying \Cref{ass:excitability} admit an LTI Koopman representation w.r.t. a function space $\V$ satisfying \Cref{ass:observables}.
    Let $\mb T$ be defined as in \Cref{ass:excitability}, and let 
    $\V_\mb{T} := \{ \varphi\in \mathcal C^1(\R^n) \mid \exists\, \psi \in \V\ \forall\; \vb{y} \in \R^n: \varphi(\vb y) = \psi(\mb{T}\vb{y}) \}$
    be the space of transformed observables and $\vb{h}(\vb{y}) := \mb{T}^{-1}\vb{f}(\mb{T}\vb{y})$. The following holds:
    \begin{enumerate}[(i)]
        \item For all $\varphi \in \V_\mb{T}$, $\langle \nabla \varphi, \vb{h}\rangle \in \V_\mb{T}$.
        \item For all $i\in \{1, \dots, n\}$, $(\vb{y}\mapsto y_i) \in \V_\mb{T}$.
        \item For all $i\in \{1, \dots, n\}$, $h_i \in \V_\mb{T}$.
        \item For all $\varphi \in \V_\mb{T}$, $\varphi$ is affine in the first $m$ coordinates. That is, there exist constants $\alpha_1, \dots, \alpha_m \in \R$ such that
        $$\varphi(y) = \varphi(0, \dots, 0, y_{m+1}, \dots, y_n) + \sum\nolimits_{i=1}^m \alpha_i y_i.$$
        \item For all $\varphi \in \V_\mb{T}$, $\varphi$ is affine. That is, there exist constants $\alpha_0, \dots, \alpha_n \in \R$ such that
        $$\varphi(\vb{y}) = \alpha_0 + \sum\nolimits_{i=1}^n \alpha_i y_i.$$
        \end{enumerate}
\end{lemma}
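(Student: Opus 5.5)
Under the change of coordinates $\vb x=\mb T\vb y$ the system \eqref{eq:sys_control_linear} becomes $\dot{\vb y}=\vb h(\vb y)+\mb T^{-1}\mb G\vb u=\vb h(\vb y)+[\vb e^1,\dots,\vb e^m]\vb u$, since the first $m$ columns of $\mb T$ are the $\vb g^i$. The plan is to prove (i)--(v) in this order, using the chain rule as the only tool relating $\V$ and $\V_\mb T$. For $\varphi(\vb y)=\psi(\mb T\vb y)$ with $\psi\in\V$ we have $\nabla\varphi(\vb y)=\mb T^\top\nabla\psi(\mb T\vb y)$. Hence
\[
\langle\nabla\varphi(\vb y),\vb h(\vb y)\rangle=\langle\nabla\psi(\mb T\vb y),\vb f(\mb T\vb y)\rangle=(\mathcal L_0\psi)(\mb T\vb y).
\]
Since $\mathcal L_0\psi\in\V$ by \Cref{def:koopman_lti}, this gives (i). For (ii), the map $\vb y\mapsto y_i$ equals $\vb x\mapsto(\vb e^i)^\top\mb T^{-1}\vb x$, a linear combination of coordinate maps; it lies in $\V$ by \Cref{ass:observables}, using that $\V$ is a linear space. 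Item (iii) then follows from (i) and (ii) exactly as in \Cref{lem:finV}, since $h_i=\langle\vb e^i,\vb h\rangle=\langle\nabla y_i,\vb h\rangle$.

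For (iv), \Cref{def:koopman_lti} implies that $\langle\nabla\psi(\vb x),\mb G\vb u\rangle=\langle\mathcal B\psi,\vb u\rangle$ is independent of $\vb x$ for every $\psi\in\V$. In $\vb y$-coordinates this means $\partial\varphi/\partial y_j\equiv\alpha_j$ is constant for $j\le m$ and every $\varphi\in\V_\mb T$. Integrating along the first $m$ coordinates gives the claimed representation.

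The key step is (v). I will show by induction on $k=m,\dots,n$ that every $\varphi\in\V_\mb T$ has constant partial derivatives $\partial\varphi/\partial y_j$ for all $j\le k$. The base case is (iv). For the induction step, take $k+1$ with $\vb v^{k+1}=\tfrac{\partial}{\partial\vb x}\vb f(\vb x^{k+1})\vb v^{j}$ for some $j\le k$. In $\vb y$-coordinates this reads $\vb e^{k+1}=\tfrac{\partial}{\partial\vb y}\vb h(\vb y^{k+1})\vb e^{j}$, where $\vb y^{k+1}=\mb T^{-1}\vb x^{k+1}$; equivalently $\partial_j h_i(\vb y^{k+1})=\delta_{i,k+1}$.

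Now fix $\varphi\in\V_\mb T$. By (i), $\chi:=\langle\nabla\varphi,\vb h\rangle\in\V_\mb T$, so by the induction hypothesis $\partial_j\chi$ is constant. Differentiating,
\[
\partial_j\chi=\sum_i\partial_j\partial_i\varphi\,h_i+\sum_i\partial_i\varphi\,\partial_jh_i .
\]
Since $\partial_j\varphi$ is constant, $\partial_i\partial_j\varphi=0$, so the first sum vanishes. This requires $\varphi\in\mathcal C^2$; I will either interpret the mixed derivative weakly or note that $\partial_j\varphi$ constant means $\varphi$ is affine in $y_j$, which justifies $\partial_j(\partial_i\varphi)=\partial_i(\partial_j\varphi)=0$ distributionally. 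Hence $\sum_i\partial_i\varphi(\vb y)\,\partial_jh_i(\vb y)\equiv c_\varphi$.

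By (iii), each $h_i\in\V_\mb T$, so $\partial_jh_i$ is constant by the induction hypothesis. Therefore $\partial_jh_i\equiv\delta_{i,k+1}$ everywhere, not just at $\vb y^{k+1}$. Substituting, $\partial_{k+1}\varphi\equiv c_\varphi$ is constant, which closes the induction. At $k=n$ all partials are constant and $\varphi$ is affine.

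The main obstacle is this induction step: the excitation condition holds only at the single point $\vb x^{k+1}$, and it is globalised precisely because $h_i\in\V_\mb T$ inherits constancy of the $j$-th derivative. The regularity issue for the second derivatives also needs care.
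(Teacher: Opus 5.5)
Your proposal is correct and takes essentially the same route as the paper: chain-rule transfer for (i)--(iii), the $\vb x$-independence of $\langle\nabla\psi,\mb G\vb u\rangle$ for (iv), and an induction for (v). In that induction, constancy of $\partial_{y_j} h_i$ (from (iii) and the induction hypothesis) globalizes the pointwise excitation identity $\partial_{y_j}\vb h(\vb y^{k+1})=\vb e^{k+1}$. You differentiate $\chi=\langle\nabla\varphi,\vb h\rangle$ directly rather than first splitting off $\sum_{i\le k}\alpha_i h_i$ as the paper does; this is a harmless simplification. Your regularity concern is in fact settled classically: $\varphi(\vb y)=\alpha_j y_j+\varphi(\vb y)|_{y_j=0}$ makes every $\partial_{y_i}\varphi$ independent of $y_j$, so the product rule applies with no weak derivatives needed.
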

\begin{proof}
    (\textit{i})-(\textit{iii}) The statements follow directly from the definition of $\V_\mb{T}$, \Cref{def:koopman_lti,ass:observables,lem:finV}.

    (\textit{iv}) From \eqref{eq:generator} and \Cref{def:koopman_lti}, it follows that, for all $\psi \in \V$, $\vb x \in \R^n$, and $\vb u \in \R^m$,
    $\langle\nabla \psi(\vb x),\mb G\vb u\rangle \equiv \langle \mathcal B \psi,\vb u\rangle,$
    where $\mathcal B \psi\in \R^m$.
    Let $i\in\{1,\dots,m\}$, we find with $\vb u =\vb e^i$, the $i$-th unit vector, and $\psi(\vb x) = \varphi(\mb T \vb y)$ for some $\varphi \in \V_{\mb T}$ that
    there exists $\alpha_i \in \R$ such that
    \begin{align*}
        \alpha_i &\equiv \langle \nabla\psi(\mb{T}\vb{y}) , \mb G\vb e^i\rangle = \langle \nabla\psi(\mb{T}\vb{y}) , \mb T\vb e^i\rangle\\
        &= \langle \nabla \varphi(\vb{y}), \vb{e}^i\rangle = \tfrac{\partial }{\partial y_i}\varphi(\vb{y}).
    \end{align*}
    Since $\V$ and $\V_{\mb T}$ are isomorphic, the assertion follows for all $\varphi \in \V_{\mb T}$.
    
    (\textit{v}) We proof the statement by induction.
    From (\textit{iv}), we know that $\varphi$ is affine in the first $m$ coordinates.
    Now, let the statement hold for $k<n$~($\ast$), i.e, for all $\varphi \in \V_\mb{T}$ there are constants $\alpha_1, \dots, \alpha_k \in \R$ such that
    \begin{equation}
        \tag{$\ast$}
        \label{eq:induction}
         \varphi(\vb{y}) = \varphi(0, \dots, 0, y_{k+1}, \dots, y_n) + \sum\nolimits_{i=1}^k \alpha_i y_i.
    \end{equation}
    Therefore, we find
    \begin{align*}
        \langle \nabla \varphi, \vb{h} \rangle
        = \sum_{i=1}^{n}\tfrac{\partial}{\partial y_i} \varphi h_i
        = \underbrace{\sum_{i=1}^{k}\alpha_i h_i}_{\in \V_\mb{T}} +  \sum_{i=k+1}^{n}\tfrac{\partial}{\partial y_i} \varphi h_i.
    \end{align*}
    Since $\langle \nabla \varphi, \vb{h} \rangle \in \V_\mb{T}$ by (\textit{i}), it follows from (\textit{iii}) and the fact that~$\V_{\mb T}$ is a vector space that
    $$ \tilde \varphi :=\sum\nolimits_{i=k+1}^{n}\tfrac{\partial}{\partial y_i} \varphi h_i \in \V_\mb{T}.$$
    By \Cref{ass:excitability}, there exists $j \leq k$ such that $\vb{v}^{k+1} = \frac{\partial}{\partial \vb{x}} \vb{f}(\vb{x}^{k+1}) \vb{v}^j$ for some $\vb{x}^{k+1} \in \R^n$, and by ($\ast$), there exists $\beta \in \R$ such that
    $ \tfrac{\partial}{\partial y_j} \tilde \varphi(\vb{y}) \equiv \beta$. Expanding the derivative 
    yields
    \begin{align*}
        \beta = \sum_{i=k+1}^{n} (\underbrace{\tfrac{\partial^2}{\partial y_i \partial y_j} \varphi}_{=0}  h_i + \tfrac{\partial}{\partial y_i} \varphi \underbrace{\tfrac{\partial}{\partial y_j} h_i}_{=:a_i \in \R})
        = \sum_{i=k+1}^{n} a_i \tfrac{\partial}{\partial y_i} \varphi.
    \end{align*}
    With $\vb{a} := [a_{1}, \dots, a_n]^\top$, there exists $\gamma \in \R$ such that
    $$\langle \nabla \varphi, \vb{a} \rangle = \sum\nolimits_{i=1}^{k} a_i \tfrac{\partial}{\partial y_i} \varphi + \sum\nolimits_{i=k+1}^{n} a_i \tfrac{\partial}{\partial y_i} \varphi = \gamma,$$
    where the first sum is constant due to ($\ast$). Finally, note that 
    \begin{align*}
        \vb{a} &= \tfrac{\partial}{\partial y_j} \vb{h} 
        = \tfrac{\partial}{\partial y_j}\mb{T}^{-1}\vb{f}(\mb{T}\vb{y})
        = \mb{T}^{-1} \tfrac{\partial}{\partial \vb{x}}\vb{f}(\mb{T}\vb{y}) \mb{T}\vb{e}^j\\
        &= \mb{T}^{-1} \tfrac{\partial}{\partial \vb{x}}\vb{f}(\vb{x}^{k+1})\vb{v}^j 
        = \mb{T}^{-1}\vb{v}^{k+1}
        = \vb{e}^{k+1},
    \end{align*}
    and we conclude that $\tfrac{\partial}{\partial y_{k+1}} \varphi = \langle \nabla \varphi, \vb{e}^{k+1} \rangle = \gamma$. Thus, by induction, the statement holds for all $k \in \{1, \dots, n\}$.
\end{proof}

The main theorem, stating that only affine-linear systems can yield an LTI Koopman representation, now follows as a corollary from the last lemma.

\begin{proof}[\textbf{Proof of \Cref{thm:affine}}]
    ($\Rightarrow$) We are given $\V$ and $\V_\mb{T}$ as in \Cref{lem:prop_VT}.
    Since $\V$ and $\V_\mb{T}$ are isomorphic, for $\psi\in\V$, there is exactly one $\varphi\in \V_\mb{T}$ such that for all $\vb{x}\in \R^n$,
    $ \psi(\vb{x}) = \varphi(\mb{T}^{-1}\vb{x}).$
    Substituting the affine structure from \Cref{lem:prop_VT}~(\textit{v}), we get
    $ \psi(\vb{x})=\alpha_0 + \vb{a}^\top \mb{T}^{-1}\vb{x},$
    with $\alpha_0\in\R$ and $\vb{a} \in \R^n$. Hence, $\psi$ is affine. By \Cref{lem:finV}, we can apply this result to $f_i\in\V$ and obtain the mentioned system structure.
    ($\Leftarrow$) With $\V:=\spanop\{(\vb{x} \mapsto 1),(\vb{x} \mapsto x_1),\dots,(\vb{x} \mapsto x_n)\}$, the assertion follows immediately.
\end{proof}

\balance

\bibliographystyle{IEEEtran}
\bibliography{references}

\end{document}

%% file: figures/brunton_error.tex
\begin{tikzpicture}

\begin{axis}[
    width=7.5cm,
    height=5cm,
    xlabel={Polynomial Degree},
    ylabel={Training Error},
    xmin=1, xmax=10,
    ymin=0,
    xtick={1,2,3,4,5,6,7,8,9,10},
    grid=major,
    grid style={gray!20},
    tick align=outside,
    tick style={black},
    legend style={
        font=\footnotesize,
        draw=none,
        fill=none,
        at={(0.9,0.)},
        anchor=south east,
        cells={align=left}
    },
    label style={font=\small},
    tick label style={font=\small},
]

\addplot[
    red,
    thick,
    mark=o,
    mark size=2pt
]
table[
    col sep=comma,
    x=Degree,
    y={Training Error}
]{data/Brunton_edmd_errors.csv};
\addlegendentry{Combined}

\addplot[
    blue,
    thick,
    dashed,
    mark=square*,
    mark size=2pt
]
table[
    col sep=comma,
    x=Degree,
    y={Autonomous Part Error}
]{data/Brunton_edmd_errors.csv};
\addlegendentry{Autonomous}

\addplot[
    black,
    thick,
    dotted,
    mark=triangle*,
    mark size=2.5pt
]
table[
    col sep=comma,
    x=Degree,
    y={Control Part Error}
]{data/Brunton_edmd_errors.csv};
\addlegendentry{Input-dependent}

\end{axis}

\end{tikzpicture}